\theoremstyle{plain}
\newtheorem{thm}{Theorem}[section]
\newtheorem{cor}[thm]{Corollary}
\newtheorem{lem}[thm]{Lemma}
\newtheorem{claim}[thm]{Claim}
\newtheorem{prop}[thm]{Proposition}
\theoremstyle{definition}
\newtheorem{defn}[thm]{Definition}
\theoremstyle{remark}
\newcommand{\ord}{\mathcal{ON}}
\newcommand{\lex}{\text{lex}}
\newcommand{\restr}{\upharpoonright}
\begin{document}
	
	\title{Algorithmic traversals of infinite graphs}
	\author{Siddharth Bhaskar\footnote{Department of Computer Science, Haverford College, Haverford PA} and Anton Jay Kienzle\footnote{Undergraduate, Haverford College, Haverford, PA}}
    \date{}
	\maketitle
    
    \begin{abstract}
    A \emph{traversal} of a connected graph is a linear ordering of its vertices all of whose initial segments induce connected subgraphs. Traversals, and their refinements such as breadth-first and depth-first traversals, are computed by various \emph{graph searching} algorithms. We extend the theory of generic search \cite{CK08} and breadth-first search from finite graphs to wellordered infinite graphs, recovering the notion of ``search trees'' in this context. We also prove tight upper bounds on the extent to which graph search and breadth-first search can modify the order type of the original graph, as well as characterize the traversals computed by these algorithms as lexicographically minimal. 
    \end{abstract}
	
    \paragraph{Acknowledgements} We would like to thank Steven Lindell and Scott Weinstein for teaching us the correct notion of traversal, and the importance of distinguishing searches (algorithms) from traversals (what they compute). We also thank Sherwood Hachtman for many illuminating conversations and for being a reliable oracle for ZFC.
    
    Finally, we are indebted to Haverford College and the Zweifler Family Fund for Student Researches in the Sciences for supporting the research project of Jay Kienzle with Siddharth Bhaskar during the summer of 2018. This paper was born from that collaboration.
    
	\section{Graph search and traversals} \label{intro}
    The idea of ``searching through'' a graph pervades the theory of graph algorithms. To quote Corneil and Krueger \cite{CK08},
    
    ``Graph searching is fundamental. Most graph algorithms employ some mechanism for systematically visiting all vertices and edges in the given graph. After choosing an initial vertex, a search of a connected graph visits each of the vertices and edges of the graph such that a new vertex is visited only if it is adjacent to some previously visited vertex.''
    
    In the present investigation, we ask to what extent the notion of graph searching can be lifted to  general infinite graphs. In doing so, it is particularly important to distinguish between the search \emph{algorithm} and the resulting \emph{vertex orderings} that it computes. We call the various algorithms \emph{searches} and the vertex orderings they compute \emph{traversals}.
    
    The most basic definition in our investigation is that of a plain traversal, and the present formulation is due to Diestel \cite{D16}, as well as Lindell and Weinstein.
    
   \begin{defn}
   Given a connected graph $(V,E)$, a \emph{traversal} is a linear ordering $<$ of $V$ such that the subgraph induced by any initial segment\footnote{i.e., subset of $V$ which is downward closed under $<$} is connected. A traversal of an arbitrary graph is a linear ordering of its vertices in which the connected components form intervals, and each such interval is a traversal of its corresponding component.
   \end{defn}
   
   Notice that this definition of traversal is strictly stronger than \emph{every non-minimal element has a $<$-lesser neighbor}, but that they are equivalent for finite graphs, and more generally wellordered traversals.
   
   The fact that every finite connected graph admits a traversal is a consequence of the correctness of the following algorithm, called \emph{generic search} by Corneil and Krueger:
   
   Given as input a finite nonempty connected graph $(V,E)$, we initialize a sequence $S$ to be some single element of $V$. While there is a neighbor of $S$ not in $S$, we add it to the end of $S$. When there are no such neighbors, the algorithm halts. One can then prove that the basic traversal algorithm halts on all finite graphs, with $S$ a traversal of $(V,E)$. Indeed, we compute \emph{all} traversals of $(V,E)$ as we vary over the traces of this algorithm.
   
   This algorithm smoothly generalizes to infinite graphs. Suppose that $(V,E)$ is any nonempty connected graph, and initialize $S$ to be some single element of $V$. We build $S$ in ordinal-valued stages: at successor stages, add any vertex of $V \setminus S$ with a neighbor in $S$ to $S$ (if such a vertex exists), and at limit stages, take the union. One can prove that $S$ attains a fixed point at some ordinal stage, at which point it is a traversal of $V$. We simply call this algorithm \emph{nondeterministic graph search}, and as a consequence of its correctness we obtain the following
   
   \begin{thm} \label{existence}
   Every connected graph admits a traversal.
   \end{thm}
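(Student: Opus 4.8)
The plan is to formalize the nondeterministic graph search sketched above as a transfinite recursion and verify its three essential properties: that it stabilizes, that its fixed point exhausts $V$, and that the order in which vertices are enumerated is a traversal. So fix a connected graph $(V,E)$; we may assume $V \neq \emptyset$.

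First I would set up the recursion carefully. Using the axiom of choice, fix a wellordering $\prec$ of $V$ and a vertex $v_0 \in V$, and define sets $S_\alpha$ by recursion on ordinals: $S_0 = \{v_0\}$; $S_{\alpha+1} = S_\alpha \cup \{v\}$, where $v$ is the $\prec$-least vertex of $V \setminus S_\alpha$ with a neighbor in $S_\alpha$, or $S_{\alpha+1} = S_\alpha$ if no such vertex exists; and $S_\lambda = \bigcup_{\alpha < \lambda} S_\alpha$ at limit $\lambda$. The sequence $(S_\alpha)$ is non-decreasing, and at a limit stage no vertex enters that was not already present earlier, so for each $v$ in the eventual union the least stage $\rho(v)$ at which $v$ appears is always $0$ or a successor; moreover distinct such vertices enter at distinct stages, since each successor step adjoins at most one vertex. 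Hence $\rho$ is injective on $\bigcup_\alpha S_\alpha$, the number of strict increases of $(S_\alpha)$ is at most $|V|$, and there is a least ordinal $\theta$ with $S_{\theta+1} = S_\theta$; write $W = S_\theta = \bigcup_\alpha S_\alpha$.

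Next I would show $W = V$. If not, then since $(V,E)$ is connected, $W \neq \emptyset$, and $W \neq V$, there must be an edge joining $W$ to $V \setminus W$ (otherwise no path in $(V,E)$ connects $v_0$ to a vertex outside $W$). But then some vertex of $V \setminus W$ has a neighbor in $W$, and such a vertex would have been added at stage $\theta + 1$, contradicting stabilization. So the recursion enumerates all of $V$, and $\rho \colon V \to \theta + 1$ is injective; let $<$ be the wellordering of $V$ obtained by pulling back the ordinal order along $\rho$.

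Finally I would identify the initial segments of $<$ and check the traversal condition. From the definition of $\rho$ one has $S_\alpha = \{u \in V : \rho(u) \le \alpha\}$, and because every value of $\rho$ is $0$ or a successor, each proper initial segment $\{u : u < v\}$ equals either $\emptyset$ or $S_\beta$ (where $\rho(v) = \beta + 1$); together with $V = S_\theta$ this shows every initial segment of $<$ is empty or some $S_\alpha$. It therefore suffices to prove by transfinite induction that each $S_\alpha$ induces a connected subgraph: the base case $S_0$ is a single vertex; at a successor stage $S_{\beta+1}$ either equals $S_\beta$ or adjoins one vertex chosen to have a neighbor in the connected graph $S_\beta$, which keeps it connected; and at a limit stage $S_\lambda$ is a nested union of connected subgraphs, so any two of its vertices already lie together in some connected $S_\beta$ and are joined by a path inside it, hence inside $S_\lambda$. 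I expect the part needing the most care to be the bookkeeping that makes the recursion well-defined and forces it to stabilize — in particular the observation that limit stages introduce nothing new, which is exactly what makes the enumeration order $\rho$ have the sets $S_\alpha$ as its initial segments; once that is secured, the connectivity induction and the use of connectedness to conclude $W = V$ are routine.
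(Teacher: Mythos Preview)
Your proof is correct and follows essentially the same approach as the paper: wellorder $V$ via choice, run the deterministic graph search, show the closure ordinal exhausts $V$ by connectedness, and verify by transfinite induction that every $S_\alpha$ induces a connected subgraph so that the stage order is a traversal. Your write-up is in fact more careful than the paper's (which distributes the argument over several short lemmas), particularly in tracking that $\rho$ only takes value $0$ or successor values and hence that every $<$-initial segment is some $S_\alpha$ or empty; the only cosmetic difference is your indexing convention $S_0=\{v_0\}$ versus the paper's $S_0=\emptyset$, $S_1=\{v_0\}$.
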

   
   (We will prove this statement when we discuss deterministic graph search below, but we note that it uses choice.)
   
   It is important to note that we do not recover all traversals as we vary over the traces of this nondeterministic algorithm, but only the \emph{wellordered} ones. In fact, it is a theme of this paper that the assumption of wellordering allows us to cleanly extend the theory of traversals from finite to infinite graphs.
   
   \subsection{Breadth- and depth-first traversals}
   There are many important refinements of the notion of traversal, probably the most fundamental of which are the \emph{breadth-first} and \emph{depth-first} traversals. Each has an associated search algorithm, namely \emph{breadth-first search} and \emph{depth-first search} respectively.
   Corneil and Krueger define breadth- and depth-first traversals for finite graphs as follows:
   
   \begin{defn}
   A traversal $<$ of a graph $(V,E)$ is \emph{breadth-first} in case for any three vertices $u < v < w$ such that $(u,w) \in E$ and $(u,v) \notin E$, there exists $x < u$ such that $(x,v) \in E$. 
   \end{defn}
   
   \begin{defn}
   A traversal $<$ of a graph $(V,E)$ is \emph{depth-first} in case for any three vertices $u < v < w$ such that $(u,w) \in E$ and $(u,v) \notin E$, there exists $x$ such that $u < x < v$ and $(x,v) \in E$. 
   \end{defn}
   
   In the case of a wellordering $<$, we can define the \emph{least neighbor function} as the function that takes any vertex (except the minimum) to its $<$-least neighbor. A wellordered traversal $<$ is breadth-first (according to the above definition) just in case its least neighbor function is weakly monotone, i.e., preserves $\le$. In Section \ref{BFT}, we will see that breadth-first search smoothly generalizes to infinite connected graphs, where it computes wellordered breadth-first traversals in the above sense. Therefore, we feel confident that the Corneil-Krueger definition of breadth-first traversal, at least for wellorderings, is the ``correct'' one.
   
   On the other hand, it is \emph{not} true that every connected graph admits a wellordered depth-first traversal, according the above definition: 
   \begin{prop}
     Let $T$ be the complete infinite binary tree of depth $\omega$, which we identify with $2^{<\omega}$. Make $T$ a graph by connecting each $u \in T$ to $u0$ and $u1$ by an edge. Then $T$ admits no depth-first traversal $<$.
   \end{prop}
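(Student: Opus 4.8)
The plan is to prove the assertion for \emph{wellordered} depth-first traversals, which is what is at stake here — the lexicographic order on $2^{<\omega}$ is easily checked to be a depth-first traversal of $T$, but it is not a wellorder, so the wellordering hypothesis is genuinely used. The strategy is an induction on order type: assuming $T$ admits a wellordered depth-first traversal, fix one, $<$, of \emph{least} order type $\tau$, and derive a contradiction by exhibiting a wellordered depth-first traversal of a graph isomorphic to $T$ of order type $<\tau$.

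The one structural ingredient I would isolate first is this fact about an arbitrary traversal $<$ of $T$: writing $T_u$ for the subtree of descendants of $u$ (including $u$), if $\min_< T_u$ is not the $<$-least vertex of the whole graph, then $\min_< T_u = u$, and moreover, when $u \ne \epsilon$, the parent of $u$ is $<u$. (If $m:=\min_< T_u$ were a proper descendant of $u$ then the parent of $m$, and all children of $m$, would lie in $T_u$ and hence be $\ge m$, so $m$ would have no $<$-lesser neighbour — impossible, since the initial segment $\{x : x\le m\}$ is connected and, as $m$ is not globally least, has more than one element. The ``moreover'' is the same observation applied at $u$ itself.) Applied to the two children of $\epsilon$, this shows that if $v$ denotes the child of $\epsilon$ whose subtree has the $<$-larger minimum and $w_0$ the other child, then $\min_< T_v = v$ and $\epsilon<v$ (note $v$ cannot be globally least, else $\min_< T_{w_0}$ would exceed the global minimum $v$, contradicting the choice of $v$).

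Now I would first reduce to the case $\epsilon=\min_< T$. If instead $\min_< T\neq\epsilon$, then $\min_< T$ lies in $T_{w_0}$, and $\langle T_v,\ {<}\restr T_v\rangle$ is a wellordered depth-first traversal of $T_v$ — a graph isomorphic to $T$ — whose least element is the root $v$ of $T_v$; its order type is $\le\tau$, hence $=\tau$ by minimality, so we may replace our data by it and assume $\epsilon=\min_< T$. Then also $w_0=\min_< T_{w_0}$ (the global minimum $\epsilon$ is not in $T_{w_0}$), so $w_0<v$. The heart of the argument is the claim that \emph{no edge of $T$ straddles $v$}, i.e.\ there is no edge $\{x,y\}$ with $x<v<y$: feeding $x<v<y$ into the depth-first condition, if $xv\notin E$ we obtain a neighbour of $v$ strictly between $x$ and $v$, but the only neighbour of $v$ lying below $v$ is $\epsilon$ (its two children lie in $T_v$, hence exceed $\min_< T_v=v$), forcing $x<\epsilon=\min_< T$ — absurd; while if $xv\in E$ then $x=\epsilon$, so $y$ is a child of $\epsilon$ with $y>v$, i.e.\ $y=w_0$, contradicting $w_0<v$.

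From the claim I would finish as follows. Since $\{x : x<v\}$ is a connected subgraph of $T$ (it is an initial segment of a traversal) that omits $v$, and $T\setminus\{v\}$ has exactly the three components $T\setminus T_v$, $T_{v0}$, $T_{v1}$, we get $\{x : x<v\}\subseteq T\setminus T_v$; conversely $T\setminus T_v$ is connected and contains $\epsilon<v$, so were it to meet $\{x : x>v\}$ some edge along a path inside it would straddle $v$ — impossible — whence $\{x : x<v\}=T\setminus T_v\supseteq T_{w_0}$. Thus $T_{w_0}$ sits inside the proper initial segment $\{x : x<v\}$ of the wellorder $\langle T,<\rangle$, so its order type is $<\tau$. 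But $T_{w_0}$ is graph-isomorphic to $T$, and ${<}\restr T_{w_0}$ is again a wellordered depth-first traversal of it: the witness the depth-first condition supplies for a vertex $b\in T_{w_0}$ is a neighbour of $b$, and since the only vertex of $T_{w_0}$ with a neighbour outside $T_{w_0}$ is its root, while the root — being $<$-least in $T_{w_0}$ — can never be the middle vertex of the condition, that witness again lies in $T_{w_0}$. This contradicts the minimality of $\tau$. The step I expect to be the crux, and the one to get right carefully, is the ``no straddling edge'' claim together with the edge-cut bookkeeping that follows it — in particular noticing that it truly needs $\epsilon$ to be the global minimum, so that the reduction to that case is an essential move rather than a cosmetic one.
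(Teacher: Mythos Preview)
Your argument is correct, but it takes a genuinely different route from the paper's. The paper, after the same reduction to a root-starting traversal, argues directly: it shows that the first $\omega$ elements of the traversal must form a single infinite branch $(v_0,v_1,v_2,\dots)$ (otherwise two consecutive elements $v_{j-1},v_j$ fail to be parent--child, and feeding $v_{j-1}<v_j<w$ into the depth-first condition with $w$ a child of $v_{j-1}$ yields a contradiction since nothing lies strictly between $v_{j-1}$ and $v_j$). Then, letting $w_i$ be the sibling of $v_{i+1}$, the depth-first condition applied to $v_{i+1}<w_i<w_{i+1}$ forces $w_{i+1}<w_i$, producing an explicit infinite descending chain.

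By contrast, you frame the problem via minimal order type and self-similarity: your ``no straddling edge'' claim shows that the entire subtree $T_{w_0}$ lies in the proper initial segment $\{x:x<v\}$, so restricting $<$ to $T_{w_0}$ yields a depth-first traversal of a copy of $T$ with strictly smaller order type. The paper's approach is shorter and exhibits the descending sequence explicitly; your approach is more structural, and the no-straddling lemma and the observation that subtrees are $<$-closed under restriction are reusable pieces. One small point you leave implicit: that ${<}\restr T_v$ and ${<}\restr T_{w_0}$ are indeed \emph{traversals} (not just orderings satisfying the depth-first clause). This follows immediately from the same observation you use for the depth-first witnesses---every non-root vertex of the subtree has all its neighbours inside the subtree, hence inherits a $<$-lesser neighbour---together with Lemma~\ref{traversals are decreasing}; it would be worth saying so explicitly.
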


	\begin{proof}
	It suffices to show there is no depth-first traversal starting at the root: if there were a depth-first traversal starting at vertex $v$, then consider its restriction to those elements above $v$ (assuming trees grow upward). We obtain a depth-first traversal of the subtree above $v$, which is isomorphic to $T$.
    
    As $T$ is acyclic, every vertex not the root must have a unique preceding neighbor, which in this case must be its parent.
   
   If there were a depth-first traversal $<$, the first $\omega$ elements would be some branch $(v_0, v_1, v_2, \dots)$ of the tree. Otherwise, consider the least $j \in \omega$ such that $v_{j-1}$ is not the parent of $v_j$. Any child $w$ of $v_{j-1}$ must appear after $v_j$ in the traversal ordering. Hence, $v_j$ is between $w$ and $v_{j-1}$, and these three vertices contradict the depth-first condition.
   
   Let $w_i$ be the child of $v_i$ that is not $v_{i+1}$. Then we claim that for each $i$, $w_{i+1}<w_i$, contradicting the assumption of wellordering. But if $w_i < w_{i+1}$ for some $i$, then $v_{i+1} < w_i < w_{i+1}$ contradicts the depth-first condition, since the only preceding neighbor of $w_i$ is $v_i$.
	\end{proof}

   The failure of every graph to admit a wellordered depth-first traversal seems to doom the prospect of any general inductive program reasonably implementing depth-first search over infinite structures, as the stages of any inductive definition are wellordered \cite{Mos74}. It does not resolve the question of whether every graph admits a depth-first traversal according to the Corneil-Krueger definition, nor what the ``correct'' definition is (there are several characterizations which are equivalent for finite graphs but inequivalent in general). In any case, we expect the analysis of infinitary depth-first search and traversals to be genuinely different from the finite case, as the assumption of wellordering which makes the theory extend so nicely for the other types of graph search fails here.
   
   \paragraph{Open questions} What is the correct definition of depth-first traversal for general, infinite graphs? Is there any sense in which depth-first search extends to infinite graphs? Alternatively, is there a different algorithmic way of producing depth-first traversals?
   
   \subsection{Deterministic searching}
    
    Nondeterministic graph search is nondeterministic because there is no canonical way to choose the next element of the traversal from the set of neighbors. However, if we consider the graph to be \emph{wellordered}, a natural choice of next element is to pick the \emph{least} neighbor at each stage. We define \emph{deterministic graph search} on an arbitrary wellordered connected graph $(V,E,<)$:
	\begin{defn}
		Define the set $S_{\alpha}$ for
		ordinal $\alpha$ by transfinite recursion:
		\begin{itemize}
			\item Let $S_0=\emptyset$ and $S_1 = \{v_{0}\}$ where $v_0$ is the $<$-least element of $V$.\footnote{Sometimes we will want to start the search on a given element $v$; we call this \emph{from} or \emph{starting with} $v$.}
			\item At successor stages, i.e., if $\alpha=\beta+1$, let $S_{\alpha}$
			be obtained from $S_{\beta}$ by adding the $<$-least element outside
			of $S_{\beta}$ but connected to $S_{\beta}$, i.e., the least element
			of
			\[
			\{v\in V\setminus S_{\beta}:\exists w\in S_{\beta}\ E(w,v)\}.
			\]
			If this set happens to be empty, let $S_{\alpha}=S_{\beta}$.
			\item At limit stages, let $S_{\alpha}=\bigcup_{\beta<\alpha}S_{\beta}$.
		\end{itemize}
		Let $\Gamma$ be the \emph{closure ordinal}; i.e., the least ordinal
		such that $S_{\Gamma}=S_{\Gamma+1}$. Notice that for $\alpha<\beta\le\Gamma$,
		$S_{\alpha}\subsetneq S_{\beta}$, but for $\Gamma\le\alpha<\beta$,
		$S_{\alpha}=S_{\beta}$.
	\end{defn}

The next few lemmas, show that ordering vertices according to the stage at which they are ``added to $S$" induces a linear ordering on $V$ that is both a traversal and a wellordering. We call the traversal of an ordered graphs defined by deterministic graph search the \emph{algorithmic traversal}.
	
	\begin{lem} $S_\Gamma = V$
	\end{lem}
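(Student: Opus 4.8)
The plan is a short proof by contradiction resting on connectedness. Before that, it is worth recording why the closure ordinal $\Gamma$ in the definition exists at all: the sequence $(S_\alpha)$ is strictly $\subseteq$-increasing up until the first stage at which it stabilizes, and by tracking the element added at each successor stage one obtains an injection from $\Gamma$ into $V$; since no ordinal of cardinality $|V|^+$ injects into $V$, stabilization must occur, so such a $\Gamma$ exists. (Equivalently, one may cite the general fact that monotone inductive definitions close off \cite{Mos74}.) This part uses no choice, since the wellordering $<$ is given as part of the data.

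Now suppose for contradiction that $S_\Gamma \neq V$. We have $v_0 \in S_1 \subseteq S_\Gamma$, so $S_\Gamma$ is nonempty, and $V \setminus S_\Gamma$ is nonempty by assumption. Unwinding the successor clause of the definition, $S_\Gamma = S_{\Gamma+1}$ forces the frontier set
\[
\{v \in V \setminus S_\Gamma : \exists w \in S_\Gamma\ E(w,v)\}
\]
to be empty: otherwise $S_{\Gamma+1}$ would be obtained from $S_\Gamma$ by adjoining the $<$-least element of this set, and hence would properly contain $S_\Gamma$, contradicting $S_\Gamma = S_{\Gamma+1}$. So no edge of $(V,E)$ has one endpoint in $S_\Gamma$ and the other in $V \setminus S_\Gamma$.

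Finally, I would invoke connectedness of $(V,E)$ to derive the contradiction: fix $a \in S_\Gamma$, $b \in V \setminus S_\Gamma$, and a path $a = x_0, x_1, \dots, x_n = b$, and let $i$ be least with $x_i \notin S_\Gamma$. Then $i \geq 1$, $x_{i-1} \in S_\Gamma$, $x_i \in V \setminus S_\Gamma$, and $E(x_{i-1},x_i)$ holds, so $x_i$ belongs to the frontier set, contradicting its emptiness. Hence $S_\Gamma = V$. I do not anticipate a genuine obstacle here; the only point needing a word of care is the existence of $\Gamma$, and the rest is a direct unwinding of the definition together with the one-line path argument.
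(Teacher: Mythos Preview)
Your argument is correct and matches the paper's proof in substance: both assume $V\setminus S_\Gamma\neq\emptyset$ and observe that either the frontier set is nonempty (contradicting $S_\Gamma=S_{\Gamma+1}$) or it is empty (contradicting connectedness). You add a remark on the existence of $\Gamma$ and spell out the connectedness contradiction via an explicit path, but these are harmless elaborations on the same idea.
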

	
	\begin{proof}
		Consider, by contradiction, the nonempty set $V \setminus S_\Gamma$. Either it contains some element with a neighbor in $S_\alpha$, in which case $S_\Gamma \subsetneq S_{\Gamma+1}$, or it has no such neighbor, making $(S_\alpha,V\setminus S_\alpha)$ a partition of $(V,E)$ into nonempty disconnected subsets.
	\end{proof}
	
	\begin{defn}
		For $v,w\in V$, let $v\prec w$ just in case the least $\alpha$ such that $v\in S_{\alpha}$
		is less than the least $\alpha$ such that $w\in S_{\alpha}$; i.e., 
		$$(\mu \alpha)(v \in S_\alpha)<(\mu \alpha)(w \in S_\alpha).$$
	\end{defn}
	
	\begin{lem} $\prec$ wellorders $V$. 
	\end{lem}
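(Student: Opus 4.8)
The plan is to produce an order-preserving injection from $(V,\prec)$ into the ordinals; this immediately forces $\prec$ to be a wellordering. Concretely, for $v\in V$ set $f(v):=(\mu\alpha)(v\in S_{\alpha})$, which is well-defined and satisfies $1\le f(v)\le\Gamma$ since $S_{0}=\emptyset$ and $S_{\Gamma}=V$ by the previous lemma. By the definition of $\prec$ we have $v\prec w$ iff $f(v)<f(w)$, so once we know $f$ is injective, $f$ is an isomorphism of $(V,\prec)$ onto $(\ran f,<)$. Since every set of ordinals is wellordered by $<$, the $\prec$-least element of a nonempty $A\subseteq V$ is then the unique $a\in A$ with $f(a)=\min f[A]$, and $\prec$ is total on $V$ because $<$ is total on $\ran f$.

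Thus the only real content is the injectivity of $f$, which rests on two features of the recursion defining $S_{\alpha}$. First, $f(v)$ is never a limit ordinal: if $\lambda$ is a limit and $v\in S_{\lambda}=\bigcup_{\beta<\lambda}S_{\beta}$, then $v\in S_{\beta}$ for some $\beta<\lambda$, so $f(v)\le\beta<\lambda$. Combined with $f(v)\neq 0$, this shows $f(v)$ is always a successor ordinal $\beta+1\le\Gamma$. Second, at a successor stage the increment $S_{\beta+1}\setminus S_{\beta}$ has at most one element: it is either empty or the singleton whose member is the $<$-least vertex outside $S_{\beta}$ with a neighbor in $S_{\beta}$. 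Consequently $\{v\in V: f(v)=\beta+1\}=S_{\beta+1}\setminus S_{\beta}$ is a singleton for each $\beta+1\le\Gamma$ with $S_\beta \subsetneq S_{\beta+1}$, which is exactly the assertion that $f$ is injective.

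Assembling these, $f\colon V\to\Gamma+1$ is injective, $\prec$ is the pullback of the wellordering of the ordinals along $f$, and hence $\prec$ wellorders $V$. I do not expect a genuine obstacle; the single point deserving care is the behavior at limit stages — one must note explicitly that the union step contributes no new vertices, so that every vertex is introduced at a unique successor stage — but this is immediate from the recursion, and the remaining pullback argument is routine.
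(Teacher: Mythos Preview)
Your proposal is correct and follows the same approach as the paper: both use the map $v\mapsto(\mu\alpha)(v\in S_\alpha)$ and observe that it gives an order isomorphism onto a set of ordinals (the paper states tersely that it is an order isomorphism between $(V,\prec)$ and the ordinal $\Gamma$). You have simply unpacked the details the paper leaves implicit---that limit stages add nothing new and successor stages add at most one element, so the map is injective---which is exactly what verifying the paper's one-line claim requires.
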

	
	\begin{proof}
		The map $v \mapsto (\mu \alpha)(v \in S_\alpha)$ is an order isomorphism between $(V,\prec)$ and the ordinal $\Gamma$.
	\end{proof}
	
	\begin{lem} $(V,\prec)$ is a traversal of $(V,E)$ 
	\end{lem}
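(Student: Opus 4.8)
The plan is to prove the more convenient statement that \emph{each set $S_\alpha$ induces a connected subgraph of $(V,E)$}, and then deduce the lemma from the observation that the initial segments of $\prec$ are exactly the sets $S_\alpha$ with $\alpha \le \Gamma$. For that observation, first note that each $S_\alpha$ is downward closed under $\prec$: if $w \in S_\alpha$ and $v \prec w$, then $v$ enters the recursion no later than $w$, so $v \in S_\alpha$. Conversely, since exactly one new vertex is added at each successor stage below $\Gamma$ and no vertex first appears at a limit stage (because $S_\lambda = \bigcup_{\beta<\lambda} S_\beta$), the proper initial segment of $(V,\prec)$ below a vertex $v$ first added at stage $\beta + 1$ is precisely $S_\beta$; together with $S_\Gamma = V$, this shows every initial segment of $\prec$ is some $S_\alpha$.

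The heart of the matter is then a transfinite induction on $\alpha$ showing $S_\alpha$ induces a connected subgraph. The cases $\alpha \le 1$ are immediate, since $S_0 = \emptyset$ and $S_1 = \{v_0\}$. At a successor stage $\alpha = \beta+1$ with $\beta \ge 1$: either $S_\alpha = S_\beta$ and we invoke the induction hypothesis, or $S_\alpha = S_\beta \cup \{v\}$ where by construction $v$ is joined by an edge to some vertex of the nonempty connected set $S_\beta$, and attaching a pendant vertex to a connected graph keeps it connected. At a limit stage $\lambda$, $S_\lambda = \bigcup_{\beta<\lambda} S_\beta$ is a $\subseteq$-increasing union of connected subgraphs all containing $v_0$; given two vertices of $S_\lambda$, both already lie in a common $S_\beta$ with $\beta < \lambda$ (take the larger of the two stages at which they first appear), which is connected, so a path between them in $S_\beta$ is a path in $S_\lambda$, and $S_\lambda$ is connected. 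This completes the induction, and hence the proof.

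I do not anticipate a real obstacle: this is a routine transfinite induction. The only step requiring a moment's care is the limit stage, where one uses that \emph{no vertex is born at a limit} to ensure that the two chosen vertices are simultaneously present at some stage strictly below $\lambda$. One should also fix the convention that the empty induced subgraph counts as connected (or simply read ``initial segment'' as ``nonempty initial segment''), so that the case $S_0 = \emptyset$ causes no trouble.
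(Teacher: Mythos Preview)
Your proposal is correct and follows exactly the approach the paper indicates: the paper's proof is the single sentence ``By induction on $\alpha$, one can show that every $S_\alpha$ induces a connected subgraph of $(V,E)$,'' and you have simply supplied the routine details of that induction together with the (implicit but necessary) identification of the $\prec$-initial segments with the sets $S_\alpha$.
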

	
	\begin{proof}
		By induction on $\alpha$, one can show that every $S_\alpha$ induces a connected subgraph of $(V,E)$.
	\end{proof}

	\paragraph{Motivating questions}
	The main purpose of this paper is to conduct a detailed analysis of the behavior of deterministic graph search and the properties of the algorithmic traversal. Towards this, we formulate two main questions which are the technical goals of our paper:
    
	\begin{itemize}
		\item We see that deterministic graph search is closed under wellorderings: the algorithmic traversal of a wellordered graph is again wellordered. One of the most important invariants of a wellordering is its order type. It is therefore natural to ask how deterministic graph search behaves with respect to the order type.
		\item Is there a sense in which, among all traversals of an ordered graph, the algorithmic traversal is canonical?
	\end{itemize}
	
    \paragraph{Contributions}
    We have made progress on both of our motivating questions:
    \begin{itemize}
    \item We prove a tight upper bound on the order type of an algorithmic traversal of a connected wellordered graph of some fixed order type. 
    \item We show that the algorithmic traversal is lexicographically least among the wellordered traversals of a wellordered connected graph.
    \item We are able to replicate both these results in the context of breadth-first traversals.
    \end{itemize}
    Furthermore,
    \begin{itemize}
    \item We find an apparently novel algorithm for producing the algorithmic traversal of a finite graph.
    \item We are able to define infinitary analogs of \emph{traversal trees}, which are so important in graph algorithms, and prove that they behave the same way as in the finite.
    \item We prove several other results which may be of independent interest, such as Theorems \ref{subsets-stable-under-traversals} and \ref{quotients-stable-under-traversals}, which give sufficient conditions for the algorithmic traversal ``factors through'' subgraphs and quotients, and Theorem \ref{preservation of cofinality} stating that deterministic search preserves cofinality in limit orders.
    \end{itemize}
    
    \section{Basic results and traversal trees} \label{basic-results}
    In this section we collect a number of basic results about deterministic graph search and the algorithmic traversal. Throughout this section, fix a wellordered connected graph $(V,E)$, and let $\prec$ be the algorithmic traversal obtained by deterministically searching $(V,E,<)$.
    
    \subsection{Functional properties of deterministic search}
    We briefly consider deterministic search as an operator on the space of wellordered traversals of a fixed graph.
    
	\begin{lem} \label{search fixes traversals}
    If $<$ is a traversal, then $\prec$ agrees with $<$.
	\end{lem}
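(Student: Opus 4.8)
The plan is to prove by transfinite induction on $\alpha$ that, as long as $S_\alpha \neq V$, the set $S_\alpha$ is exactly the proper initial segment of $(V,<)$ of order type $\alpha$. Granting this, the closure ordinal $\Gamma$ is the order type of $(V,<)$, each $S_\alpha$ with $\alpha < \Gamma$ is the $<$-initial segment of order type $\alpha$, and hence the map $v \mapsto (\mu\alpha)(v \in S_\alpha)$ sending each vertex to the stage at which it enters $S$ is precisely the order isomorphism of $(V,<)$ onto $\Gamma$. By the definition of $\prec$, this says $v \prec w$ iff $v < w$, i.e.\ $\prec$ agrees with $<$, which is the claim.

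For the induction, the base cases $S_0 = \emptyset$ and $S_1 = \{v_0\}$ are immediate, since $v_0$ is the $<$-least element of $V$, so $S_1$ is the $<$-initial segment of order type $1$. At a limit stage $\alpha$, $S_\alpha = \bigcup_{\beta < \alpha} S_\beta$; if some $S_\beta = V$ with $\beta < \alpha$ there is nothing to prove, and otherwise each $S_\beta$ is the $<$-initial segment of order type $\beta$, these are nested increasingly, and their union is the $<$-initial segment of order type $\sup_{\beta < \alpha}\beta = \alpha$ (using that $\alpha$ is a limit). The only step that actually invokes the hypothesis that $<$ is a traversal is the successor step.

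So suppose $\alpha = \beta + 1$ with $\beta \geq 1$, that $S_\beta \neq V$, and inductively that $S_\beta$ is the $<$-initial segment of order type $\beta$. Let $v_\beta$ be the $<$-least element of $V \setminus S_\beta$, so $S_\beta \cup \{v_\beta\}$ is the $<$-initial segment of order type $\beta + 1$. Since $<$ is a traversal, this set induces a connected subgraph of $(V,E)$; it has at least two vertices (as $\beta \geq 1$ forces $S_\beta \neq \emptyset$ and $v_\beta \notin S_\beta$), so $v_\beta$ is not isolated in it, and therefore $v_\beta$ has a neighbor in $S_\beta$. Hence $v_\beta \in \{v \in V \setminus S_\beta : \exists w \in S_\beta\ E(w,v)\}$, and being the $<$-least element of the larger set $V \setminus S_\beta$ it is a fortiori the $<$-least element of this subset. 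Thus deterministic search selects $v_\beta$ at stage $\beta + 1$, giving $S_{\beta+1} = S_\beta \cup \{v_\beta\}$, the $<$-initial segment of order type $\beta + 1$, completing the induction.

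I do not anticipate any genuine obstacle: the argument is a routine transfinite induction, and the only care needed is the bookkeeping around the special initialization at stage $1$ and the harmless case in which an initial segment has already exhausted all of $V$. Note that connectedness of $(V,E)$ is not needed beyond what the traversal hypothesis already delivers — the traversal property is exactly what guarantees a fresh neighbor at every successor stage below the order type, so deterministic search never stalls early.
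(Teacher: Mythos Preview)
Your proof is correct. It differs in presentation from the paper's argument: you run a direct transfinite induction on the stage $\alpha$, showing that each $S_\alpha$ coincides with the $<$-initial segment of order type $\alpha$, whereas the paper argues by contradiction via a minimal counterexample --- it takes the $<$-least $v$ witnessing an inversion, observes that by minimality the set $\{u : u < v\}$ is $\prec$-downward closed and hence equals some $S_\gamma$, and then uses connectedness of $\{u : u \le v\}$ to force $v$ into $S_{\gamma+1}$ ahead of any $w > v$. The underlying mechanism is the same (connectedness of $<$-initial segments pins down the next vertex), but your forward induction makes the identification $S_\alpha = \{v : v \text{ has } <\text{-rank} < \alpha\}$ explicit and handles the bookkeeping around limit stages and the closure ordinal more transparently; the paper's version is shorter but requires the reader to unpack why minimality of $v$ forces $\{u : u < v\}$ to be a $\prec$-initial segment.
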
	
	\begin{proof}
		Let $v$ be the $<$-least element where $\exists w \prec v \ v < w$.  The set of elements at most $v$ in $(V,<)$ forms a connected subgraph, and the elements $<$-less then $v$ form a traversal initial segment by our choice of $v$, so any $S_\alpha$ containing $w$ also contains $v$.  This implies $v \prec w$, so no such $v$ exists.  Trichtomy strengthens $v<w \Longrightarrow v \prec w$ into $v<w \Longleftrightarrow v \prec w$.
	\end{proof}
	
	\begin{cor} The traversal algorithm is idempotent. As a consequence, it is a surjection from the set of wellorders to the set of wellordered traversals. \end{cor}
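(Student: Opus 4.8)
The plan is to read off both assertions directly from Lemma~\ref{search fixes traversals} together with the three lemmas preceding it. Fix the connected graph $(V,E)$ and let $F$ denote deterministic graph search regarded as a function on the set of wellorderings of $V$: given a wellorder $<$ of $V$, $F(<)$ is the algorithmic traversal $\prec$ produced by running deterministic search on $(V,E,<)$. This is a well-defined function of the wellorder $<$ alone, since the start vertex is prescribed to be the $<$-least element. By the lemmas establishing $S_\Gamma = V$, that $\prec$ wellorders $V$, and that $(V,\prec)$ is a traversal, the value $F(<)$ is always a wellordered traversal of $(V,E)$; hence $F$ maps the set of wellorders of $V$ into the set of wellordered traversals of $(V,E)$, and this latter set is a subset of the former, so composing $F$ with itself makes sense.

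For idempotence: since $F(<)$ is a wellordered traversal for every wellorder $<$, Lemma~\ref{search fixes traversals} applies to it and yields $F(F(<)) = F(<)$. Thus $F \circ F = F$, which is exactly the claim that the traversal algorithm is idempotent.

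For surjectivity: let $<$ be any wellordered traversal of $(V,E)$. Then $<$ is in particular a wellorder of $V$, so it lies in the domain of $F$, and Lemma~\ref{search fixes traversals} gives $F(<) = <$. Hence every wellordered traversal of $(V,E)$ is in the image of $F$, so $F$ is a surjection from the wellorders of $V$ onto the wellordered traversals of $(V,E)$.

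I do not expect any real obstacle: the entire content sits in Lemma~\ref{search fixes traversals} and the correctness of deterministic search. The only point requiring a moment's care is the bookkeeping mentioned above — verifying that $F$ is genuinely a function of the input wellorder and that its codomain can legitimately be taken to be the wellordered traversals — so that the words ``idempotent'' and ``surjection'' refer to the right sets.
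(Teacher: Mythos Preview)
Your proof is correct and follows exactly the paper's approach: the paper's one-line proof simply observes that $(V,E,\prec)$ is a traversal and hence fixed by Lemma~\ref{search fixes traversals}, which is precisely the content of your argument for both idempotence and surjectivity. Your version is more explicit about the bookkeeping (defining $F$, checking domain and codomain), but the underlying reasoning is identical.
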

	\begin{proof} $(V,E,\prec)$ is a traversal, so is fixed by the traversal algorithm.
	\end{proof}
	
	\subsection{Existence of traversals}
    Here we refine Theorem \ref{existence} in Section \ref{intro}.
    
	\begin{lem} 
		\label{cardinal traversal order}
		If $(V,<)$ has cardinal order type $\kappa$, then $(V,\prec)$ has order type $\kappa$ as well.
	\end{lem}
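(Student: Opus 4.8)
The plan is to prove that the closure ordinal $\Gamma$ equals $\kappa$; since the preceding lemma exhibits an order isomorphism between $(V,\prec)$ and $\Gamma$, the order type of $(V,\prec)$ is then $\kappa$, as desired. We may assume $\kappa$ is infinite, for if $V$ is finite then $\prec$ is a linear order on a set of size $|V|$ and so has order type $|V|$. One direction is immediate: that same order isomorphism gives $|\Gamma| = |V| = \kappa$, and since $\kappa$ is a cardinal this forces $\Gamma \ge \kappa$.

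For the reverse inequality $\Gamma \le \kappa$, it suffices to show that every vertex is inserted before stage $\kappa$, i.e. $(\mu\alpha)(v \in S_\alpha) < \kappa$ for every $v \in V$; for then $V = \bigcup_{\alpha < \kappa} S_\alpha = S_\kappa$, hence $S_\kappa = S_{\kappa+1}$ and $\Gamma \le \kappa$. I would prove this by induction on the graph distance $d(v_0, v)$, which is finite because $(V,E)$ is connected. The base case $v = v_0$ holds since $v_0 \in S_1$. For the inductive step the following suffices: if $E(x,y)$ and $x \in S_\gamma$ with $\gamma < \kappa$, then $y \in S_{\gamma'}$ for some $\gamma' < \kappa$.

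To prove this sub-claim, assume $y \notin S_\gamma$ and let $\gamma' = (\mu\alpha)(y \in S_\alpha)$; note $\gamma' \le \Gamma$, and that $\gamma'$ is a successor ordinal since $y \neq v_0$. For every $\beta$ with $\gamma \le \beta < \gamma'$ we have $x \in S_\gamma \subseteq S_\beta$ and $y \notin S_\beta$, so $y$ belongs to the candidate set $\{v \in V \setminus S_\beta : \exists w \in S_\beta\ E(w,v)\}$; hence this set is nonempty and its $<$-least element, the vertex inserted at stage $\beta + 1$, is $\le y$, and in fact $< y$ whenever $\beta + 1 < \gamma'$. Since the search is strictly increasing on $[0,\Gamma]$, the vertices inserted at the successor stages in the interval $(\gamma, \gamma']$ are pairwise distinct and all lie in $\{z \in V : z \le y\}$, a set of cardinality less than $\kappa$ because $y$ has $<$-rank below $\kappa$ and $\kappa$ is a cardinal. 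Consequently $(\gamma, \gamma']$ contains fewer than $\kappa$ successor ordinals, which bounds the ordinal $\rho$ with $\gamma + \rho = \gamma'$ below $\kappa$; as $\kappa$ is an infinite cardinal it is closed under ordinal addition, so $\gamma' = \gamma + \rho < \kappa$.

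The step I expect to require the most care is this last passage — extracting ``$y$ is inserted before stage $\kappa$'' from ``fewer than $\kappa$ vertices can be inserted before $y$ is'' — which amounts to a small amount of ordinal bookkeeping (counting the successor ordinals in an interval, and using that $\kappa$ is additively closed). It is worth emphasising that both hypotheses are genuinely used here: connectedness is what makes the distance induction terminate after finitely many applications of the sub-claim, and ``$\kappa$ a cardinal'' (rather than an arbitrary wellorder type) is exactly what keeps $|\{z \le y\}|$, the number of successor stages that elapse, and the resulting finite sum of ordinals all strictly below $\kappa$. The remaining ingredients — the behaviour of $S_\alpha$ at limit stages and the injectivity of the map sending a successor stage to the vertex it inserts — are routine consequences of the definition of deterministic search and the strict monotonicity of $(S_\alpha)_{\alpha \le \Gamma}$.
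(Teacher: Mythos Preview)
Your proof is correct. Both your argument and the paper's hinge on the same key observation: once a vertex $y$ has a neighbour in some $S_\beta$, every vertex inserted at a later stage (until $y$ itself is inserted) must be $<y$. You apply this locally, propagating the bound ``inserted before stage $\kappa$'' along edges and hence along finite paths from $v_0$, using the additive closure of $\kappa$ at each step. The paper instead applies it globally in a single contradiction: assuming $S_\kappa \neq V$, connectivity gives some $w \notin S_\kappa$ adjacent to some $v \in S_\delta$ with $\delta < \kappa$; then every element of $S_\kappa \setminus S_\delta$ lies below $w$ in $(V,<)$, so this set of cardinality $\kappa$ is bounded in a well-order of type $\kappa$, which is impossible. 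Your inductive route is a bit more explicit about how connectivity is used and makes the ordinal bookkeeping visible, while the paper's argument is shorter and avoids the distance induction entirely by exploiting the cardinality contradiction directly.
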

	
	\begin{proof}
		If $\kappa$ is finite, we are done.  Otherwise, $\kappa$ is limit.  Suppose by contradiction that $W = V\setminus S_\kappa$ is nonempty.  Because $V$ is connected, there is an edge connecting some $w \in W$ and $v \in S_\delta$, for some $\delta < \kappa$. Then $\forall u \in S_\kappa \setminus S_\delta$, $u < w$.  Therefore, we have a set ($S_\kappa \setminus S_\delta$) which is not cofinal in $(V,<)$, but which has cardinality $\kappa$, a contradiction.
	\end{proof}
	
    As a consequence, we immediately see that
    
	\begin{cor} \label{exists traversal of least order type}
		Assuming choice, every connected graph has a traversal whose order type is the cardinality of its set of vertices.
	\end{cor}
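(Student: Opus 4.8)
The plan is to combine the wellordering theorem with Lemma~\ref{cardinal traversal order}; this corollary is essentially immediate once that lemma is in hand. Given a connected graph $(V,E)$, set $\kappa = |V|$, the cardinality of its vertex set viewed as an initial ordinal. Using choice, fix a bijection $V \bij \kappa$ and transport the ordinal wellordering of $\kappa$ back along it to obtain a wellordering $<$ of $V$ of order type exactly $\kappa$. So $(V,E,<)$ is a wellordered connected graph whose underlying order has cardinal order type.

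Next I would run deterministic graph search on $(V,E,<)$, obtaining the algorithmic traversal $\prec$. By the three lemmas preceding the ``Motivating questions'' paragraph, $\prec$ is a wellordering of $V$ and $(V,\prec)$ is a traversal of $(V,E)$. Since $(V,<)$ has cardinal order type $\kappa$, Lemma~\ref{cardinal traversal order} applies and tells us $(V,\prec)$ has order type $\kappa$ as well. Thus $\prec$ is a traversal of $(V,E)$ whose order type is $|V|$, as required.

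There is essentially no obstacle here: the substantive work lives inside Lemma~\ref{cardinal traversal order} (the cardinality/cofinality counting argument showing $S_\kappa$ omits no vertex), and the only point requiring a moment's care is the bookkeeping that ``order type equal to the cardinality'' means the initial ordinal $\kappa$ itself rather than some arbitrary wellorder of that size — which is exactly what the lemma is phrased to deliver, and why we must first arrange $<$ to have cardinal order type before invoking it. It is worth remarking that this corollary sharpens Theorem~\ref{existence}: not only does every connected graph admit a traversal, it admits one of the least possible order type.
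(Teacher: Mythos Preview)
Your proof is correct and matches the paper's approach exactly: the paper treats this corollary as an immediate consequence of Lemma~\ref{cardinal traversal order} (writing only ``As a consequence, we immediately see that''), and you have simply spelled out the obvious steps---use choice to wellorder $V$ with cardinal order type, then apply the lemma to the resulting algorithmic traversal.
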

	
	\subsection{Least neighbors and traversal trees}
	One of the most important invariants of a wellordered graph is its least neighbor function.
	
	\begin{defn}
		The \emph{least neighbor function} $p:V\to V$ is the function that takes a vertex $v$ to its $<$-least neighbor. It will be convenient to think of $p$ as undefined on the least element of $V$.
	\end{defn}
	
	As we remarked in Section \ref{intro}, having a lesser neighbor is equivalent to being a traversal under the assumption of wellordering:
	
	\begin{lem} \label{traversals are decreasing}
		$(V,<)$ is a traversal of $(V,E)$ just in case $\forall v \in V,\ p(v)<v$.
	\end{lem}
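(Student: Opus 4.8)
The plan is to prove the two implications separately; the forward direction is immediate, and the backward one goes by transfinite induction along the wellorder $<$.

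For ($\Rightarrow$), suppose $<$ is a traversal and let $v$ be a non-minimal vertex. The set $I=\{u\in V:u\le v\}$ is an initial segment, so the subgraph it induces is connected; since this subgraph has at least the two distinct vertices $v_0$ (the $<$-least element) and $v$, it has no isolated vertex, so $v$ has some neighbor $u\in I$ with $u\neq v$, i.e.\ a neighbor $u<v$. As $p(v)$ is by definition the $<$-least neighbor of $v$, this gives $p(v)\le u<v$.

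For ($\Leftarrow$), suppose $p(v)<v$ for every non-minimal $v$; note this is meaningful because $(V,E)$ is connected, so every non-minimal vertex has a neighbor and $p$ is defined on it. I would show by transfinite induction on $v\in V$ that $D_v:=\{u\in V:u\le v\}$ induces a connected subgraph. The case $v=v_0$ is the singleton $\{v_0\}$. For $v>v_0$, the set $\{u\in V:u<v\}$ equals the increasing union $\bigcup_{u<v}D_u$ of connected subgraphs (by the inductive hypothesis), each containing $v_0$; any two of its vertices lie in a common $D_u$ and hence are joined by a path inside it, so $\{u:u<v\}$ is connected. Since $p(v)<v$, the vertex $p(v)$ lies in this set and is adjacent to $v$, so $D_v=\{u:u<v\}\cup\{v\}$ is connected. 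Finally, any initial segment $I$ is $\emptyset$, or equals $D_v$ for $v=\max I$ when that maximum exists, or equals the increasing union $\bigcup_{v\in I}D_v$ of connected subgraphs otherwise (this last case includes $I=V$, consistent with the standing assumption that $(V,E)$ is connected); in every case the same chain argument shows $I$ induces a connected subgraph, so $<$ is a traversal.

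There is no real obstacle: the lemma merely makes precise the remark following the definition of traversal in Section~\ref{intro}. The only things needing a little care are the limit bookkeeping in the backward direction — that an initial segment with no maximum is genuinely the union of the smaller segments $D_v$, and that an increasing union of connected subgraphs sharing a vertex is connected — and the degenerate one-vertex graph, where the hypothesis is vacuous and $<$ is trivially a traversal.
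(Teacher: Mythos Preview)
Your proof is correct and follows essentially the same approach as the paper's: the forward direction is identical, and for the backward direction the paper argues by taking a least disconnected initial segment (which then cannot be limit, and whose maximum element violates $p(w)<w$), whereas you run the equivalent direct transfinite induction. The content is the same---limit initial segments are increasing unions of connected pieces sharing $v_0$, and successor segments attach one new vertex via the edge to $p(v)$.
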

	
	\begin{proof}
		Suppose that for all non-minimal elements $v$, $p(v) < v$, but there is some initial segment $W$ of $(V,<)$ which induces a disconnected subgraph of $(V,E)$. We may assume that $W$ is the least such initial segment, and observe that $W$ cannot have limit order type. Let $w$ be the maximum element of $(W,<)$. Then since the subgraph induced by $W$ is disconnected but the subgraph induced by $W\setminus \{w\}$ is not, all neighbors of $w$ lie in $V\setminus W$. But since $W$ is initial, $p(w) > w$, a contradiction.
		
		Conversely suppose that $(V,<)$ is a traversal of $(V,E)$. Fix an arbitrary non-minimal $v \in V$, and consider the set ${w\in V: w \le v}$ of its predecessors. Since this set is initial, it induces a connected subgraph, so $v$ has some strictly lesser neighbor.
	\end{proof}
    
    If $(V,<)$ is a traversal, then $p$ also defines a spanning tree of the original graph:
	
   	\begin{lem} \label{traversal tree}
		Define the edge relation $E^\dagger$ to be the symmetrization of the graph of $p$. Then $(V,E^\dagger)$ is a spanning tree, i.e., an acyclic connected subgraph of $(V,E)$.
	\end{lem}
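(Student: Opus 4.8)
The plan is to check in turn the three defining properties of a spanning tree — being a subgraph on the full vertex set, being connected, and being acyclic — using throughout the standing hypothesis that $(V,<)$ is a traversal, equivalently (by Lemma \ref{traversals are decreasing}) that $p(v)<v$ for every non-minimal $v$. Write $v_0$ for the $<$-least element of $V$, the unique vertex on which $p$ is undefined (totality of $p$ elsewhere uses connectedness: every vertex other than $v_0$ has a neighbor, hence a $<$-least one). The first property is immediate: by construction every edge of $E^\dagger$ has the form $\{v,p(v)\}$, and $p(v)$ is a neighbor of $v$ in $E$ by definition, while $E^\dagger$ is defined on all of $V$. So the only real content is connectivity and acyclicity, and both fall out of the fact that $p$ strictly decreases.

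For connectivity, I would fix an arbitrary $v\in V$ and iterate: consider the sequence $v,\,p(v),\,p(p(v)),\,\dots$. By Lemma \ref{traversals are decreasing} it is strictly $<$-decreasing for as long as it is defined, so by wellfoundedness of $<$ it must terminate, and it can only terminate upon reaching $v_0$. The corresponding finite sequence of $E^\dagger$-edges is a path from $v$ to $v_0$ in $(V,E^\dagger)$; since $v$ was arbitrary, every vertex is joined to $v_0$, so $(V,E^\dagger)$ is connected.

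For acyclicity, I would argue by contradiction: suppose $u_1,u_2,\dots,u_k,u_1$ is a cycle in $E^\dagger$ (distinct $u_i$, $k\ge 3$), and let $u_i$ be its $<$-maximum vertex. Then $u_i\neq v_0$, and $u_i$ has two distinct $E^\dagger$-neighbors among the $u_j$, both strictly $<$-below $u_i$ by maximality. But a vertex $w\neq v_0$ has at most one $E^\dagger$-neighbor below itself: its neighbor $p(w)$ is below $w$, while any other $E^\dagger$-neighbor $w'$ of $w$ satisfies $p(w')=w$ and hence $w'>p(w')=w$. This contradiction shows $(V,E^\dagger)$ is acyclic, completing the proof that it is a spanning tree.

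I do not anticipate a genuine obstacle here; the argument is short and the only points requiring care are the reliance on Lemma \ref{traversals are decreasing} (so the traversal hypothesis is essential), the observation that $p$ is total away from $v_0$ by connectedness, and the bookkeeping that ``acyclic'' in the infinite setting refers only to finite cycles — which is exactly what the maximum-element argument handles.
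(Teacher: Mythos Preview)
Your proof is correct and follows essentially the same approach as the paper: both establish connectivity by iterating $p$ down to $v_0$ using that $p$ is strictly decreasing on a wellorder, and both derive acyclicity from the combination of functionality of $p$ and $p(v)<v$. The only cosmetic difference is in the acyclicity argument: the paper orients each $E^\dagger$-edge by $p$ and case-splits on whether a putative cycle is directed (contradicting $p(v)<v$) or not (forcing two outgoing arrows at some vertex, contradicting functionality), whereas you go straight to the $<$-maximum vertex of the cycle and observe it would need two $E^\dagger$-neighbors below itself while $p$ provides only one---this is the same contradiction localized at one vertex, avoiding the case split.
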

    
    	\begin{proof}
		Notice that for every $(v,w)\in E^\dagger$, there is a unique choice of $u_0$ and $u_1$ in $\{v,w\}$ such that $p(u_0)=u_1$. Call $(u_0,u_1)$ the \emph{orientation} of the edge $\{v,w\}$.
		
		Suppose there were a cycle in $(V,E^\dagger)$. Then it must either be a directed or undirected cycle, considering the orientation on the edges. But any undirected cycle must contain two arrows coming out of the same vertex, contradicting the functionality of $p$. Any directed cycle contradicts the property that $p(v) \prec v$ for any $v$.
		
		To show that $E^\star$ is spanning, it suffices to show that the minimal vertex $v_0$ is in the orbit of every element. But since on any other element $p(v) \prec v$, this follows.
	\end{proof}
    
    When $(V,<)$ is \emph{not} a traversal, we consider the least neighbor function of the associated algorithmic traversal.
    
	\begin{defn}
		The \emph{traversal least neighbor function} $p^\star:V\to V$ is the function that takes a vertex $v$ to its $\prec$-least neighbor, and is undefined on the least element of $V$. Notice that for any non-minimal vertex $v$, $p^\star (v) \prec v$, by Lemma \ref{traversals are decreasing}.
	\end{defn}
	
    The spanning tree associated with the traversal least neighbor function is the important graph-algorithmic concept of \emph{search tree}, which consists of the graph edges ``along which the search occurs,'' an idea made formal by Theorem \ref{search tree search}.
    
	\begin{thm} \label{search tree search}
		The traversal relation $\prec^\star$ obtained by traversing $(V,E^\star,<)$ is identical to $\prec$.
	\end{thm}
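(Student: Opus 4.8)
The plan is to prove, by transfinite induction, that deterministic search on $(V, E^\star, <)$ passes through exactly the same stages as deterministic search on $(V,E,<)$; the equality of the two induced orderings is then immediate. First note that since $p^\star(v)$ is by definition an $E$-neighbor of $v$, the graph of $p^\star$ is contained in $E$, so $E^\star \subseteq E$; moreover $(V, E^\star)$ is a (connected) spanning tree by Lemma \ref{traversal tree}, so deterministic search on $(V, E^\star, <)$ is well-defined. Write $S_\alpha$ for the stages of the search on $(V,E,<)$ (which define $\prec$) and $S^\star_\alpha$ for the stages of the search on $(V,E^\star,<)$ (which define $\prec^\star$).

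I claim $S_\alpha = S^\star_\alpha$ for every ordinal $\alpha$. For $\alpha \in \{0,1\}$ this holds because both searches begin from $S_0 = \emptyset$ and $S_1 = \{v_0\}$ with the same $<$-least vertex $v_0$; at limit stages it is immediate since both searches take unions. The content is the successor step, for $\alpha = \beta+1$ with $\beta \ge 1$. Suppose $S_\beta = S^\star_\beta$ and call this common set $S$; note $S \supseteq \{v_0\}$. If $S = V$ then both candidate sets are empty and $S_{\beta+1} = S = S^\star_{\beta+1}$, so assume $S \ne V$; by connectedness the $E$-candidate set is then nonempty. Let $v$ be its $<$-least element, so $S_{\beta+1} = S \cup \{v\}$. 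The key observation is that $S = \{w \in V : w \prec v\}$: the vertex $v$ is precisely the one whose ``stage added'' is $\beta+1$, and $w \prec v$ means exactly that $w$ was added at an earlier stage, i.e.\ that $w \in S_\beta = S$.

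Now, because $\prec$ is a traversal of $(V,E)$, Lemma \ref{traversals are decreasing} gives $p^\star(v) \prec v$, hence $p^\star(v) \in S$. Since $\{v, p^\star(v)\}$ is an edge of $E^\star$, the vertex $v$ has an $E^\star$-neighbor in $S$, so $v$ lies in the candidate set for the $E^\star$-search at stage $\beta$. But $E^\star \subseteq E$, so this candidate set is contained in the candidate set for the $E$-search at stage $\beta$, of which $v$ is the $<$-least member; hence $v$ is also $<$-least in the smaller set, and the $E^\star$-search adds $v$ at stage $\beta+1$ as well. Thus $S^\star_{\beta+1} = S \cup \{v\} = S_{\beta+1}$, completing the induction. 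Since the two families of stages coincide, the stage at which any vertex is added is the same for both searches, so $\prec^\star = \prec$.

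The only non-routine point — hence the main obstacle — is the successor step: specifically, the identification $S_\beta = \{w : w \prec v\}$ and the use of the fact that $\prec$ is a traversal (via Lemma \ref{traversals are decreasing}) to force $p^\star(v)$ back into $S_\beta$. Everything else is the bookkeeping of transfinite induction together with the trivial observation that shrinking the edge set can only shrink the candidate sets.
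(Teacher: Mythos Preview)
Your proof is correct and follows essentially the same approach as the paper's: a transfinite induction showing $S_\alpha = S^\star_\alpha$, with the successor step handled by observing that $S_\beta$ is the $\prec$-initial segment below $v$, so $p^\star(v) \in S_\beta$ witnesses that the $<$-least $E$-neighbor of $S_\beta$ is already an $E^\star$-neighbor. You have added some extra care (the $S = V$ case, the well-definedness of the $E^\star$-search via Lemma~\ref{traversal tree}, the explicit $\beta \ge 1$ to ensure $v \ne v_0$), but the argument is the same.
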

    
    \begin{proof}
    	Let $S^\star_\alpha$ represent the sequence of initial segments produced by the traversal algorithm on $(V,E^\star,<)$.  We show $S_\alpha = S^\star_\alpha$ by induction.  Suppose that for all $\mu < \alpha$, $S_\mu = S^\star_\mu$.  In the limit case $$S_\alpha = \bigcup\limits_{\mu < \alpha} S_\mu = S^\star_\alpha.$$  In the sucessor case, there is some $\beta$ where $\beta + 1 = \alpha$.  By assumption, $S_\beta = S^\star_\beta$.  We need to show that the least neighbor of $S_\beta$ in $(V,E,<)$ and $S^\star_\beta$ in $(V,E^\star,<)$ are the same. Since $E^\star \subseteq E$, it suffices to show that the $<$-least neighbor of $S_\beta$ in $E$ is already a neighbor in $E^\star$.  Let $v$ be that least neighbor. As $S_\beta$ is the set of all elements $\prec$-less than $v$ and $p^\star(v) \prec v$, $p^\star (v) \in S_\beta$ .  But $(p^\star (v),v) \in E^\star$ so v is a neighbor of $S^\star_\beta$ in $(V,E^\star,<)$, which concludes the proof.
    \end{proof}
	
	\subsection{Subgraphs and quotients}

	Here we give sufficient conditions under which the algorithmic traversals of certain subgraphs and quotients of a given graph are sub- or quotient orders of the algorithmic traversal of that graph.
    
    \begin{thm} \label{subsets-stable-under-traversals}
		Suppose $W \subseteq V$ is closed under $p^\star$, except for possibly at its $\prec$-least element $w_0$. Then $W$ induces a connected subgraph, and the algorithmic traversal $(W,\triangleleft)$ starting with $w_0$ of that subgraph agrees with $\prec$ on $W$.
	\end{thm}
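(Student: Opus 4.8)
The plan is to prove the two assertions in sequence: first that $W$ induces a connected subgraph, and then that running deterministic search on that subgraph from $w_0$ recomputes $\prec$ restricted to $W$.

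For connectedness, I would show that every $w\in W$ is joined to $w_0$ by a path lying inside $W$. Iterating $p^\star$ starting from $w$ produces a sequence $w,\,p^\star(w),\,p^\star(p^\star(w)),\dots$; by the closure hypothesis this sequence remains in $W$ until (if ever) it reaches $w_0$, and by Lemma~\ref{traversals are decreasing} each step strictly decreases under $\prec$. Since $\prec$ is a wellorder the sequence is finite, and the only element of $W$ at which it can halt is $w_0$ --- the one place where $p^\star$ is either undefined (which forces $w_0$ to be the global $\prec$-least vertex) or not required by hypothesis to stay in $W$. Each edge $\{u,p^\star(u)\}$ lies in $E$ with both endpoints in $W$, so this is a path in the induced subgraph, and hence $(W,E\restr W)$ is connected.

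For the agreement claim, let $\rho\colon (W,\prec\restr W)\to\lambda$ be the order isomorphism onto its order type $\lambda$, write $w_\alpha:=\rho^{-1}(\alpha)$, and let $(R_\alpha)$ be the stages of deterministic search on $(W,E\restr W,<\restr W)$ starting with $w_0$. I would prove by transfinite induction that $R_\alpha=\{w\in W:\rho(w)<\alpha\}$ for $\alpha\le\lambda$ and $R_\alpha=W$ for $\alpha\ge\lambda$. The base case ($R_0=\emptyset$, $R_1=\{w_0\}$) and the limit cases are immediate, so the content lies in the successor step: granting $R_\alpha=\{w\in W:w\prec w_\alpha\}$ with $1\le\alpha<\lambda$, I must identify the $<\restr W$-least element of $W\setminus R_\alpha$ adjacent to $R_\alpha$ as $w_\alpha$. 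That $w_\alpha$ is a candidate is routine: $w_\alpha\ne w_0$, so $p^\star(w_\alpha)\in W$ by closure; $p^\star(w_\alpha)\prec w_\alpha$ by Lemma~\ref{traversals are decreasing}, so $p^\star(w_\alpha)\in R_\alpha$; and $\{p^\star(w_\alpha),w_\alpha\}\in E$.

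The crux --- and the only place the hypothesis on the original search is genuinely used --- is minimality. Here I would invoke the description of $\prec$ in terms of the original search: every non-root vertex enters the original search at a successor stage, so $w_\alpha$ is added at some stage $\beta+1$; moreover $\{x\in V:x\prec w_\alpha\}$ is exactly the original stage $S_\beta$, and by construction $w_\alpha$ is the $<$-least element of $V\setminus S_\beta$ adjacent to $S_\beta$. Since $R_\alpha=W\cap S_\beta$, any $v\in W$ with $v<w_\alpha$ and $v\notin R_\alpha$ lies outside $S_\beta$ but, if it were adjacent to $R_\alpha\subseteq S_\beta$, would be adjacent to $S_\beta$, contradicting the $<$-minimality of $w_\alpha$. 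This closes the induction; since then $(\mu\alpha)(w\in R_\alpha)=\rho(w)+1$, the traversal $\triangleleft$ computed by this search satisfies $w\triangleleft w'\iff\rho(w)<\rho(w')\iff w\prec w'$, i.e.\ $\triangleleft$ agrees with $\prec$ on $W$. I expect this transfer of minimality from the global search to the search on $W$ to be the main obstacle; the remaining verifications are bookkeeping, the only slightly delicate one being that non-root vertices of $V$ enter the original search at successor stages, so that ``the stage just before $w_\alpha$'' is well defined and equals $\{x:x\prec w_\alpha\}$.
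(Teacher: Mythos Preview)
Your proof is correct but proceeds differently from the paper's. You argue by direct transfinite induction that the stages $R_\alpha$ of the search on $W$ coincide with the $\prec$-initial segments of $W$, reducing the successor step to the observation that $R_\alpha = W\cap S_\beta$ (where $\beta+1$ is the stage at which $w_\alpha$ enters the global search), so that the global $<$-minimality of $w_\alpha$ among neighbors of $S_\beta$ immediately restricts to $<$-minimality among neighbors of $R_\alpha$. The paper instead argues by contradiction from a minimal inversion: taking $w_1\prec w_2$ with $w_2\triangleleft w_1$ and $w_1$ $\prec$-least, $w_2$ $\triangleleft$-least for this $w_1$, it uses $p^\star(w_1)\in W$ to force $w_2<w_1$ and the $\triangleleft$-least neighbor of $w_2$ to force $w_1<w_2$. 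Your approach is more transparent about how the algorithm actually runs and makes the role of the closure hypothesis very explicit at the single point where it is used; the paper's minimal-counterexample argument is more compact, avoids indexing the stages, and mirrors the structure of the companion proof for quotients (Theorem~\ref{quotients-stable-under-traversals}), which lends the two results a uniform style.
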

    
    \begin{proof}
    The fact that $W$ induces a connected subgraph is a consequence of the fact that $w_0$ appears in the $p^\star$-orbit of every element of $W$ after finitely many steps.
    
    Suppose by contradiction that there are elements $w_1,\ w_2 \in W$ such that $w_1 \prec w_2$ but $w_2  \,\triangleleft\,  w_1$.
    We may assume that $w_1$ is the $\prec$-least element that occurs in such an inversion, and that $w_2$ is the $\triangleleft\,$-least among elements that occur in an inversion with $w_1$.
    
    It cannot be the case that $w_1 = w_0$, since $w_0$ is both $\prec$- and $\triangleleft\,$-least in $W$ and therefore cannot occur in any inversions. Similarly $w_2 \neq w_0$, since $w_0 \prec w_1 \prec w_2$.
    
    Let $w' = p^\star(w_1)$, so that $w' \in W$ and $w' \prec w_1 \prec w_2$. By minimality of $w_1$, $w' \triangleleft\, w_2$. Consider the stage where $w_2$ gets added in the algorithmic traversal of the subgraph induced by $W$. Since $w_1$ is connected to a previous element ($w'$), it must be the case that  $w_2 < w_1$.
    
    Now let $w''$ be the $\triangleleft\,$-least neighbor of $w_2$ in $W$, so that $w'' \triangleleft\, w_2 \triangleleft\, w_1$. By minimality of $w_2$, it must be the case that $w'' \prec w_1$. Consider the stage where $w_1$ gets added in the algorithmic traversal of the whole graph $(V,E,<)$. Since $w_2$ is connected to a previous element ($w''$), it must be the case that $w_1 < w_2$, a contradiction.
    
    \end{proof}

    \begin{thm} \label{quotients-stable-under-traversals}
    Suppose that $P$ is a partition of $(V,\prec)$ into intervals, and suppose that every member $W_i \in P$ induces a connected subgraph and is closed under $p^\star$, except for possibly at its least element $w_i$. Order $P$ by setting $W_i < W_j$ just in case $w_i < w_j$, and make it into a (connected) graph by putting an edge between $W_i$ and $W_j$ in case there is an edge between some element of $W_i$ and some element of $W_j$ in the original graph $(V,E)$.
    Let $(P,\triangleleft)$ be the algorithmic traversal order of this graph. Then $W_i \triangleleft\, W_j$ iff $w_i \prec w_j$.
    \end{thm}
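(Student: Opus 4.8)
The plan is to adapt the inversion argument from the proof of Theorem~\ref{subsets-stable-under-traversals}, the new ingredient being a structural observation about how $E$-edges cross between blocks. Write $<_P$ for the given order on $P$ (so $W_i <_P W_j$ iff $w_i < w_j$), $\prec_P$ for the order with $W_i \prec_P W_j$ iff $w_i \prec w_j$ (equivalently, the order that $P$ inherits from $(V,\prec)$, the blocks being $\prec$-intervals), and $E_P$ for the edge relation on $P$; the goal is to show that the algorithmic traversal $\triangleleft$ of $(P,E_P,<_P)$ equals $\prec_P$.

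First I would establish the key observation: if $W \prec_P W'$ are distinct members of $P$, then every $E$-edge joining a vertex of $W$ to a vertex of $W'$ has its $W'$-endpoint equal to the $\prec$-least element $w'$ of $W'$. Indeed, a vertex $v \in W'$ with $v \neq w'$ has $p^\star(v) \in W'$ by the closure hypothesis, so every neighbour of $v$ is $\succeq p^\star(v) \succeq w'$ and hence lies in $W'$ or a $\prec$-later block, never in the $\prec$-earlier block $W$. (As a byproduct, taking $W'$ an arbitrary non-$\prec$-least block and $W$ the block of $p^\star(w')$ shows, via Lemma~\ref{traversals are decreasing}, that $\prec_P$ is itself a traversal of $(P,E_P)$; I will not need this, but it is a reassuring sanity check.)

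Next I would run the template of the proof of Theorem~\ref{subsets-stable-under-traversals}. Both $\triangleleft$ and $\prec_P$ have least element the block $W_0$ containing $v_0$, since $v_0$ is simultaneously the $\prec$-least and the $<$-least vertex of $V$; so if $\triangleleft \neq \prec_P$ there is an inversion, and I pick $A$ that is $\prec_P$-least among blocks in an inversion and $B$ that is $\triangleleft$-least among blocks in an inversion with $A$, so $A \prec_P B$ but $B \triangleleft A$, with $A \neq W_0$. Taking $A'$ to be the block of $p^\star(w_A)$, we have $A' \prec_P A$, $(A',A) \in E_P$, and $A' \triangleleft B$ by minimality of $A$, so at the moment $B$ is added in the $\triangleleft$-search on $(P,E_P,<_P)$ the block $A$ is an available but unchosen candidate, forcing $w_B < w_A$. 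Take $B'$ to be the $\triangleleft$-least $E_P$-neighbour of $B$; it satisfies $B' \triangleleft B$ (Lemma~\ref{traversals are decreasing}, since $\triangleleft$ is a traversal of the connected graph $(P,E_P)$ and $B \neq W_0$), hence $B' \prec_P A \prec_P B$ by minimality of $B$, and so the key observation pins the $B$-side of the $E_P$-edge between $B'$ and $B$ to the root $w_B$. Thus $w_B$ has a neighbour $x \in B'$ with $x \prec w_A$, and inspecting the \emph{original} search on $(V,E,<)$ at the moment $w_A$ is added, $w_B$ (being $\succ w_A$) is an available but unchosen candidate, forcing $w_A < w_B$ --- a contradiction. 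Hence $\triangleleft = \prec_P$, which is the claim.

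The step I expect to be the main obstacle --- and the reason a verbatim copy of the proof of Theorem~\ref{subsets-stable-under-traversals} does not work --- is producing an available candidate inside $B$ for the original search: a priori the edge realizing the $E_P$-adjacency of $B'$ and $B$ could attach to an arbitrary, $<$-large vertex of $B$ and yield no usable inequality, and the key observation is exactly what forces it onto the root $w_B$, the vertex the inversion setup controls. The rest is routine bookkeeping in the manner of Theorem~\ref{subsets-stable-under-traversals}: checking $p^\star$ is defined wherever used (non-root vertices, $w_A$, and $w_B$ are all $\neq v_0$), that the set reached by the original search just before adding a vertex $v$ is $\{z : z \prec v\}$, and that a successor step of a search adds the $<$-least vertex among all available candidates. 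Notably, this argument does not seem to require Theorem~\ref{subsets-stable-under-traversals} itself.
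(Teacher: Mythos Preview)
Your argument is correct and is essentially the paper's own proof: both pick a minimal inversion, use a $\prec$-earlier neighbouring block of the $\prec$-smaller side to force one $<_P$-inequality in the quotient search, then use the $\triangleleft$-least neighbour of the $\triangleleft$-smaller side together with the closure hypothesis to force the opposite $<$-inequality in the original search. The only cosmetic difference is that you isolate the ``cross-block edges touch the root'' fact as a preliminary observation, whereas the paper proves the needed instance inline at the point of use.
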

    
    \begin{proof}
    Extend $\prec$ to $P$ by defining $W_i \prec W_j$ in case $w_i \prec w_j$. Suppose by contradiction that there is a pair $(i,j)$ such that $W_i \triangleleft\, W_j$ but $W_j \prec W_i$. Let $W_j$ be the $\prec$-least element of $P$ that occurs in such an inversion and $W_i$ be the $\triangleleft\,$-least element of $P$ that occurs in an inversion with $W_j$.
    
    Let $W_k$ be the $\prec$-least neighbor of $W_j$ in $P$. Then $W_k \prec W_i$, so by minimality of $W_j$, $W_k \triangleleft\, W_i$. Then at the stage where $W_i$ is added to the algorithmic traversal of $P$, $W_j$ is a candidate for addition, having a previous neighbor, $W_k$. Therefore, $W_j < W_i$, so $w_i < w_j$ in $V$.
    
    On the other hand, let $W_l$ be the $\triangleleft\,$-least neighbor of $W_i$. Then $W_l \triangleleft\, W_j$, so by minimality of $W_i$, $W_l \prec W_j$, so $w_l \prec w_j$ in $V$. Consider the stage where $w_j$ gets added to the algorithmic traversal of $V$. Since $W_i$ is connected to $W_l$, there must be an edge between $w_i$ and $W_l$. Otherwise, since $W_l$ is an interval, $p^\star(v)\notin W_i$ for some $v\neq w_i$ in $W_i$. Since $W_j$ is an interval, this neighbor of $w_i$ is $\prec$-less than $w_j$. Therefore, when $w_j$ gets added, $w_i$ is a candidate for addition, and $w_j < w_i$, a contradiction.
    \end{proof}

	\section{Algorithmic traversals and order type} \label{order-type}
	In this section we investigate the following question: given ordinal $\alpha$, what is an upper bound on the order type of algorithmic traversals of graphs with order type $\alpha$? The question of \emph{lower bounds} is also interesting, but we do not have much to say about it. 
	
	\begin{lem}
		\label{ordered subcofinality}
		Suppose set $V$ has orderings $<$ and $\prec$. For any set $C$ cofinal in $(V,<)$ there is a subset $C^\star$ of $C$ also cofinal in $(V,<)$ such that for any 
		$c_1, c_2 \in C^\star$ 
		$c_1 < c_2 \iff c_1 \prec c_2$.    
	\end{lem}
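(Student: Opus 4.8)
The plan is to avoid any recursion and instead write down $C^\star$ in closed form. We take
\[
C^\star \;=\; \{\, c \in C : c' < c \text{ for every } c' \in C \text{ with } c' \prec c \,\},
\]
so that $C^\star$ consists of exactly those $c \in C$ that are $<$-above every one of their $\prec$-predecessors lying in $C$. (We use here that $\prec$ is a wellordering, as it is in every application we make of this lemma; some such hypothesis is genuinely necessary, since for $V = \omega$ with $<$ the usual order and $\prec$ its reverse no cofinal $C^\star$ exists at all.) With this definition, agreement of the two orders on $C^\star$ is immediate: if $c_1, c_2 \in C^\star$ with $c_1 \prec c_2$, then $c_1$ is a $\prec$-predecessor of $c_2$ belonging to $C$, so $c_1 < c_2$ by the defining property of $C^\star$; the reverse implication then follows from trichotomy of the two linear orders.

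The content of the lemma is therefore that $C^\star$ is still $<$-cofinal in $V$. Suppose not; then $C^\star$ is $<$-bounded, say $c < v_0$ for all $c \in C^\star$, for some $v_0 \in V$. Since $C$ is $<$-cofinal, the set $\{\, c \in C : v_0 \le c \,\}$ is nonempty, and since $\prec$ wellorders $V$ this set has a $\prec$-least element $d$; note $d \ge v_0$, so $d \notin C^\star$. On the other hand, we claim $d \in C^\star$. Indeed, let $c' \in C$ with $c' \prec d$; by $\prec$-minimality of $d$ in $\{\, c \in C : v_0 \le c \,\}$ we cannot have $v_0 \le c'$, hence $c' < v_0 \le d$, so $c' < d$. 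Thus every $\prec$-predecessor of $d$ in $C$ is $<$-below $d$, which is precisely the membership condition for $C^\star$ — contradicting $d \notin C^\star$.

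The only real obstacle is settling on the right explicit set $C^\star$; once that is done, the agreement check is the one-line trichotomy argument and the cofinality check is the short ``take the $\prec$-least element above the bound'' argument above, which is also the sole place where wellfoundedness of $\prec$ is used essentially. An equivalent but more bookkeeping-intensive route would build $C^\star$ by transfinite recursion along $\prec$, admitting $c$ into $C^\star$ exactly when it is $<$-above everything admitted before it; the closed-form description simply packages this without the recursion.
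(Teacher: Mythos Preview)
Your proof is correct and takes a genuinely different route from the paper's. The paper constructs $C^\star$ by the transfinite recursion you allude to at the end: it sets $f(\alpha)$ equal to the $\prec$-least $c\in C$ lying $<$-above every $f(\beta)$ with $\beta<\alpha$, and takes $C^\star$ to be the range of $f$. (This in fact produces the very same set as your closed form: an element $c\in C$ lies in the range of $f$ exactly when it is $<$-above all of its $\prec$-predecessors in $C$.) For cofinality the paper argues that a $<$-upper bound $t\in C$ for $C^\star$ would make every stage of the recursion nonempty, so $f$ would inject all of $\ord$ into $C$, which is impossible. Your explicit description makes the order-agreement check a one-liner, and your ``take the $\prec$-least $c\in C$ with $c\ge v_0$'' argument for cofinality is shorter and avoids the appeal to Replacement; the paper's recursive picture, by contrast, hands you the $\prec$-increasing enumeration of $C^\star$ directly. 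You are also right to flag that wellfoundedness of $\prec$ is needed---the paper uses it silently when it writes $\mu_\prec$.
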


	\begin{proof}
		Let $C$ be any cofinal subset of $(V,<)$.  We will define $C^\star$ as the range of some increasing function. Define a partial function $f:\mathcal{ON} \to C$ by recursion as follows: $$f(\alpha) = (\mu_\prec c \in C)(\forall \beta < \alpha\  f(\beta) < c).$$ If for some $\alpha$, the set $\{c \in C: \forall \beta < \alpha \  f(\beta) < c\}$ is empty, then $f$ is undefined for ordinal $\alpha$ and above.
		
		Define $C^\star$ to be the image of $f$. Clearly, $C^\star \subseteq C$. We claim that for $\gamma < \delta$ in the domain of $f$, both $f(\gamma) < f(\delta)$ and $f(\gamma) \prec f(\delta)$, which shows that $<$ and $\prec$ agree on $C^\star$.
		
		Fix $\gamma < \delta$ in the domain of $f$. By definition, $$f(\delta) \in \{c \in C: \forall \beta < \delta\ f(\beta)<c\},$$ all of whose elements bound $f(\gamma)$ from above. Therefore, $f(\gamma) < f(\delta)$. Moreover, 
		$$\{c \in C: \forall \beta < \delta\ f(\beta)<c\} \subseteq \{c \in C: \forall \beta < \gamma\ f(\beta)<c\}.$$ Taking the $\prec$-least element of both sides yields $f(\gamma) \preceq f(\delta)$, and $f(\gamma) \prec f(\delta)$ since they are unequal.
		
		It remains to show that $C^\star$ is cofinal in $(V,<)$, for which it suffices to show that it is cofinal in $(C,<)$. Otherwise there would exist some $t \in C$ bounding all elements of $C^\star$ from above.  
		This would indicate that $f$ has domain $\ord$, as for any ordinal $\alpha$, $\{c \in C: \forall \beta < \alpha \  f(\beta) < c\}$ would contain $t$, so $f(\alpha)$ would be defined. However, $f$ is an injective function with a codomain that is a set, so it cannot have a proper class domain.
	\end{proof}
	
	\begin{thm}
		\label{preservation of cofinality}
		Given a well-ordered connected graph $(V,E,<)$ having limit order type and traversal ordering $(V,\prec)$, any $<$-cofinal subset of $V$ is also $\prec$-cofinal.
	\end{thm}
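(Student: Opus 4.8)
The plan is to transfer the claim to the initial segments of the traversal. Since $(V,<)$ has limit order type it has no largest element, so for subsets of $V$ the properties ``$<$-bounded'' and ``not $<$-cofinal'' coincide. Taking contrapositives, the theorem is equivalent to the assertion that every proper $\prec$-initial segment of $(V,\prec)$ is $<$-bounded; and the proper $\prec$-initial segments are exactly the stages $S_\alpha$ with $\alpha<\Gamma$ (they form a strictly increasing chain $\emptyset=S_0\subsetneq\cdots\subsetneq S_\Gamma=V$ of length $\Gamma+1$, which is the number of initial segments of $(V,\prec)\cong\Gamma$). So the goal becomes: for every $\alpha<\Gamma$, $S_\alpha$ is $<$-bounded in $V$. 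Granting this, a $<$-cofinal $C$ that were not $\prec$-cofinal would satisfy $C\subseteq S_\gamma$ for some $\gamma<\Gamma$, forcing $S_\gamma$ to be $<$-cofinal --- a contradiction.

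I would establish ``$S_\alpha$ is $<$-bounded'' by transfinite induction on $\alpha<\Gamma$. The case $\alpha=0$ is vacuous. If $\alpha=\beta+1<\Gamma$, then $S_\alpha$ is $S_\beta$ together with at most one new vertex, and adjoining finitely many vertices to a $<$-bounded set keeps it $<$-bounded (again using that $(V,<)$ has no maximum).

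The limit case carries the content. Let $\alpha<\Gamma$ be a limit ordinal with every $S_\beta$ ($\beta<\alpha$) $<$-bounded. Because $\alpha<\Gamma$ we have $S_\alpha\subsetneq V$, so connectedness yields an edge $E(w,z)$ with $w\in S_\alpha$ and $z\in V\setminus S_\alpha$; fix $\beta<\alpha$ with $w\in S_\beta$. I claim $S_\alpha\subseteq S_\beta\cup\{v\in V:v<z\}$. Indeed, any $v\in S_\alpha\setminus S_\beta$ enters $S$ at some successor stage $\epsilon+1$ with $\beta\le\epsilon<\alpha$; at that stage $z$ is itself a legal addition (it lies outside $S_\epsilon$ and is adjacent to $w\in S_\beta\subseteq S_\epsilon$), but deterministic search adds the $<$-least such vertex and picks $v\ne z$, whence $v<z$. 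So $S_\alpha$ is contained in the union of the two $<$-bounded sets $S_\beta$ and $\{v:v<z\}$, hence is $<$-bounded, closing the induction and the proof.

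The only genuine difficulty is the limit step, and the crux there is the simple observation that once a neighbour $z$ of $S$ has appeared, the ``always take the $<$-least available vertex'' discipline caps below $z$ everything added before $z$ itself --- this is precisely what forbids a proper $\prec$-initial segment from being $<$-cofinal. The limit-order-type hypothesis is used only to identify ``$<$-bounded'' with ``not $<$-cofinal''; as a byproduct the argument also shows $(V,\prec)$ has limit order type, since otherwise its $\prec$-maximum $m$, entering at a stage $\gamma+1=\Gamma$, would make $S_\gamma=V\setminus\{m\}$ at once $<$-bounded (by the claim) and $<$-cofinal (deleting a single vertex from a limit order leaves a cofinal set).
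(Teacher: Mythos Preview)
Your proof is correct and takes a genuinely different route from the paper's. The paper first invokes Lemma~\ref{ordered subcofinality} to refine an arbitrary $<$-cofinal set $C$ to a $<$-cofinal subset $C^\star$ on which $<$ and $\prec$ agree, and then argues by contradiction: taking the $\prec$-least $t$ above all of $C^\star$ and its traversal least neighbor $s=p^\star(t)$, it locates some $c_3\in C^\star$ with both $s\prec c_3$ and $t<c_3$; at the stage where $c_3$ is added, $t$ is already an available neighbor with $t<c_3$, contradicting the choice rule. You instead reformulate the claim as ``every $S_\alpha$ with $\alpha<\Gamma$ is $<$-bounded'' and establish this by transfinite induction, the limit step carried by the observation that once an outside neighbor $z$ of $S_\alpha$ becomes visible from some $S_\beta\subseteq S_\alpha$, every vertex added between stages $\beta$ and $\alpha$ must lie $<$-below $z$. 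Both arguments rest on the same local mechanism---the least-neighbor discipline caps subsequent additions below any standing candidate---but your version bypasses the auxiliary refinement lemma entirely and is more self-contained, while the paper's version isolates that refinement as a separate reusable statement. Your closing derivation of Corollary~\ref{limit order preservation} as a byproduct is exactly how the paper obtains it too.
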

	
	\begin{proof}
		Let $C$ be a $<$-cofinal subset of $V$.  By Lemma \ref{ordered subcofinality}, we can refine $C$ to a $<$-cofinal subset $C^\star$ on which $\prec$ agrees with $<$. It suffices to show that $C^\star$ is $\prec$-cofinal.
		
		Suppose $C^\star$ is not $\prec$-cofinal and consider the $\prec$-least element $t$ of $V$ $\prec$-above all elements of $C^\star$. Let $s=p^\star(t)$. Then $s \prec t$, so by minimality, there is some $c_1 \in C^\star$ such that $s \preceq c_1$.
		By $<$-cofinality of $C^\star$, there is some  $c_2 \in C^\star$ such that $t < c_2$.
		Pick some $c_3 \in C^\star$ which is $<$-greater than both of these, which is possible since $(C^\star,<)$ has limit order type. By transitivity, $t < c_3$. Additionally since $\prec$ and $<$ agree on $C^\star$, $s \prec c_3$.
		
		Let $\alpha$ be the stage of the traversal when $c_3$ is added. Then $s \in S_\alpha$, $t \notin S_\alpha$ as $c_3 \prec t$, and $c_3$ is the $<$-least neighbor of $S_\alpha$.
		
		However, $t$ is a neighbor of $S_\alpha$, as $t$ is connected to $s$. But $t < c_3$, which contradicts the minimality of $c_3$.
	\end{proof}
	
    We are now in a position to make our one statement about lower bounds: in the notation of the above theorem, the cardinality of $(V,\prec)$ is equal to the cardinality of $(V,<)$, and, if $(V,<)$ is limit, then the cofinality of $(V,\prec)$ is at least the cofinality of $(V,<)$.
    
	\begin{cor}
		\label{limit order preservation}
		The algorithmic traversal of any well-ordered connected graph with limit order type has limit order type.
	\end{cor}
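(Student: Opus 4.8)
The plan is to argue by contradiction, with Theorem~\ref{preservation of cofinality} doing all the real work. A nonempty wellorder has limit order type exactly when it has no greatest element, and a graph of limit order type is nonempty; so it suffices to show that $(V,\prec)$ has no $\prec$-greatest element. Suppose toward a contradiction that $t$ is $\prec$-greatest in $V$.

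First I would note that $V \setminus \{t\}$ is not cofinal in $(V,\prec)$, since every one of its elements is strictly $\prec$-below $t$. Theorem~\ref{preservation of cofinality} asserts that every $<$-cofinal subset of $V$ is $\prec$-cofinal, so its contrapositive gives that $V \setminus \{t\}$ is not $<$-cofinal either. Hence there is some $v \in V$ lying strictly $<$-above every element of $V \setminus \{t\}$.

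The last step is to observe that $v$ cannot belong to $V \setminus \{t\}$, since no element is strictly above itself; therefore $v = t$, that is, $t$ is the $<$-greatest element of $V$. This contradicts the hypothesis that $(V,<)$ has limit order type, so $(V,\prec)$ has no greatest element and thus has limit order type.

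I do not expect any real obstacle here: the argument is essentially a one-line application of Theorem~\ref{preservation of cofinality} together with the elementary facts that a nonempty wellorder is limit iff it lacks a maximum, and that failure of $V\setminus\{t\}$ to be $<$-cofinal forces the bound to be $t$ itself. As a sanity check one may package the same reasoning through cofinalities: the argument above in fact shows $\cf(V,\prec)\neq 1$, hence $\cf(V,\prec)$ is infinite, which is equivalent to $(V,\prec)$ being a limit ordinal and is consistent with the lower-bound remark made just before the corollary.
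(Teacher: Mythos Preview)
Your proof is correct and is essentially the same as the paper's: both argue by contradiction from a $\prec$-maximal element $t$ and use Theorem~\ref{preservation of cofinality} applied to $V\setminus\{t\}$. The only cosmetic difference is that the paper invokes the theorem directly (showing $V\setminus\{t\}$ is $<$-cofinal hence $\prec$-cofinal), whereas you use its contrapositive.
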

	
	\begin{proof}
		Consider some well-ordered connected graph $(V,E,<)$ with limit order type and traversal ordering $(V, \prec)$.
		Suppose the traversal order type were not limit.  Then $V$ has $\prec$-maximal vertex $t$.  $V \setminus \{t\}$ is $<$-cofinal as $(V,<)$ has no maximal element, so has an element greater than $t$.  By Theorem \ref{preservation of cofinality}, $V \setminus \{t\}$ is $\prec$-cofinal, which is impossible because it does not contain $t$. 
	\end{proof}

	\begin{defn}
		Let $\zeta_\alpha$ denote the supremum of the algorithmic traversal order type of connected graphs with order type $\alpha$.
	\end{defn}
	
	\begin{defn}
		For any ordinal $\alpha$, $\zeta_\alpha$ is \emph{realized} if there is some well-ordered connected graph with order type $\alpha$ and traversal order type $\zeta_\alpha$.
	\end{defn}
	
	\begin{prop}
		\label{zeta increasing}
		$\zeta_\alpha$ is non-decreasing in $\alpha$.
	\end{prop}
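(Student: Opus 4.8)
The plan is to prove $\zeta_\alpha \le \zeta_\beta$ for $\alpha \le \beta$ by a padding construction. Fix $\alpha \le \beta$ and let $\delta$ be the unique ordinal with $\beta = \alpha + \delta$; we may assume $\alpha \ge 1$, since $\zeta_0 = 0$ (the supremum is over a class that is empty, as connected graphs are nonempty). Given an arbitrary connected wellordered graph $(V,E,<)$ of order type $\alpha$ with $<$-least vertex $v_0$, let $\gamma$ denote the order type of its algorithmic traversal, equivalently its closure ordinal $\Gamma$, so that $S_\Gamma = V$. It suffices to build a connected wellordered graph of order type $\beta$ whose algorithmic traversal has order type at least $\gamma$: then $\zeta_\beta \ge \gamma$ for every order type $\gamma$ realized by a connected graph of order type $\alpha$, and taking the supremum over all such graphs yields $\zeta_\beta \ge \zeta_\alpha$.

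For the construction, adjoin to $V$ a disjoint set $D$ of order type $\delta$. Let $<'$ extend $<$ by placing every element of $D$ above every element of $V$, with $D$ itself wellordered in type $\delta$, and let $E' = E \cup \{\,\{v_0,d\} : d \in D\,\}$. The graph $(V',E',<')$ with $V' = V \cup D$ is connected, since each new vertex is adjacent to $v_0$ and $v_0$ already reaches all of $V$, and it has order type $\alpha + \delta = \beta$.

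The heart of the argument is the claim that the search stages $S'_\eta$ of $(V',E',<')$ coincide with the stages $S_\eta$ of $(V,E,<)$ for all $\eta \le \gamma$, proved by transfinite induction. Limit stages and the initial stages are immediate (note $v_0$ is also the $<'$-least vertex of $V'$). At a successor stage $\eta+1$ with $\eta \ge 1$, since $v_0 \in S'_\eta = S_\eta$, the set of candidates in $(V',E',<')$ is the set of candidates in $(V,E,<)$ together with all of $D$; but every element of $D$ is $<'$-above every vertex of $V$, so the $<'$-least candidate is the $<$-least $V$-candidate whenever one exists, and by connectedness together with $S_\Gamma = V$, a $V$-candidate exists exactly when $\eta < \gamma$. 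Hence $S'_\gamma = V$, and from stage $\gamma$ onward the search appends the elements of $D$ one at a time in $<'$-order, each such element's only neighbor $v_0$ being already present; so the closure ordinal of $(V',E',<')$ is exactly $\gamma + \delta$. Therefore $\zeta_\beta \ge \gamma + \delta \ge \gamma$, which finishes the proof.

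I expect no serious obstacle; the only delicate point is the bookkeeping in the inductive claim, specifically verifying that the freshly added $D$-vertices, although permanently available as candidates once $v_0$ has entered $S$, are never selected before $V$ has been fully searched, and that $V$ is fully searched precisely at stage $\gamma$, which is exactly where connectedness of $(V,E)$ enters via the lemma $S_\Gamma = V$.
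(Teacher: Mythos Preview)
Your proof is correct and uses the same padding construction as the paper: copy $G$ onto the initial segment of order type $\alpha$ and connect every new vertex to $v_0$. The only difference is in how the key claim (that the search of the larger graph agrees with the search of $G$ through stage $\gamma$) is justified: the paper appeals to Theorem~\ref{subsets-stable-under-traversals}, observing that the copy of $V$ inside $H$ is closed under $p^\star$ except at $v_0$ (trivially, since non-root vertices of $V$ have no new neighbors), whereas you carry out the stage-by-stage induction directly. Your direct argument is slightly more self-contained and in fact yields the exact closure ordinal $\gamma+\delta$ rather than just a lower bound, while the paper's route illustrates the utility of the general subgraph lemma; both are short and entirely adequate here.
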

	
	\begin{proof}
		It suffices to show that for any ordinals $\alpha<\beta$ and any connected graph $G$ on $\alpha$, there is a connected graph $H$ on $\beta$ whose algorithmic order type is at least that of $G$. We may define $H$ by making a copy of $G$ on the initial segment $\alpha \subseteq \beta$ and connecting all remaining vertices to the origin. Then the subgraph induced by the set $\alpha$ is connected and closed under the least neighbor function, so by Lemma \ref{subsets-stable-under-traversals}, the algorithmic traversal of $G$ agrees with the algorithmic traversal of $H$ on elements of the copy of $G$ inside $H$. In particular, the order type of the algorithmic traversal of $H$ is at least that of $G$.

	\end{proof}
	
	\begin{cor}
		\label{limit bound}
		For limit ordinals $\beta$, $\zeta_\beta = \sup_{\alpha<\beta} \zeta_\alpha$.
	\end{cor}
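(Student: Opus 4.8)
The plan is to prove the two inequalities $\zeta_\beta \ge \sup_{\alpha<\beta}\zeta_\alpha$ and $\zeta_\beta \le \sup_{\alpha<\beta}\zeta_\alpha$ separately. The first is immediate from Proposition \ref{zeta increasing}: since $\zeta_\alpha \le \zeta_\beta$ for every $\alpha < \beta$, the supremum on the left is at most $\zeta_\beta$.

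For the reverse inequality, write $\sigma := \sup_{\alpha<\beta}\zeta_\alpha$ and fix an arbitrary connected graph $G = (V,E,<)$ of order type $\beta$ with algorithmic traversal $\prec$; it suffices to bound $\gamma := \operatorname{ot}(V,\prec)$ by $\sigma$. Since $\beta$ is limit, Corollary \ref{limit order preservation} tells us $\gamma$ is limit as well, so it is enough to show $\delta + 1 \le \sigma$ for every $\delta < \gamma$, because then $\gamma = \sup_{\delta < \gamma}(\delta+1) \le \sigma$. Given such a $\delta$, I would let $I$ be the initial segment of $(V,\prec)$ of order type $\delta+1$. The idea is that $I$, with the structure induced from $G$, is a \emph{smaller} connected graph whose algorithmic traversal still has order type $\delta+1$, so that the inductive bound ``below $\beta$'' applies to it.

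There are three things to check. First, $I$ induces a connected subgraph of $G$ whose algorithmic traversal agrees with $\prec$ on $I$: this follows from Theorem \ref{subsets-stable-under-traversals}, since $I$ is closed under $p^\star$ except at the global least vertex $v_0$ (because $p^\star$ strictly decreases $\prec$-rank, which is bounded by $\delta$ on $I$), and $v_0$ is simultaneously the $\prec$-least and the $<$-least element of $I$, so that the search ``starting with $v_0$'' of $G\restr I$ is just its (default) algorithmic traversal; in particular that traversal has order type $\delta+1$. Second, $\operatorname{ot}(I,<) < \beta$: since $\gamma$ is limit we have $\delta+1 < \gamma$, so $I$ is a \emph{proper} initial segment of $(V,\prec)$ and hence not $\prec$-cofinal in $V$; by Theorem \ref{preservation of cofinality} (applicable as $\beta$ is limit) $I$ is then not $<$-cofinal in $V$ either, and a non-cofinal subset of a limit ordinal is bounded, hence has order type $< \beta$. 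Third, combining these, $G\restr I$ is a connected graph of order type $\alpha' := \operatorname{ot}(I,<) < \beta$ with algorithmic traversal of order type $\delta+1$, so $\delta+1 \le \zeta_{\alpha'} \le \sigma$. Taking the supremum over all connected graphs $G$ of order type $\beta$ then yields $\zeta_\beta \le \sigma$.

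The only step carrying any real content is the middle one — ruling out that the proper initial segment $I$ of $(V,\prec)$ might nonetheless be cofinal in $(V,<)$ — and this is precisely the situation Theorem \ref{preservation of cofinality} was designed to exclude; everything else is bookkeeping with order types together with a direct appeal to Theorem \ref{subsets-stable-under-traversals} and to monotonicity of $\zeta$.
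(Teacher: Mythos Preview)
Your argument is correct and follows essentially the same route as the paper's proof: both use monotonicity of $\zeta$ for one inequality, then for the other take an arbitrary graph of order type $\beta$, invoke Corollary~\ref{limit order preservation} to know the traversal order type is limit, and for each proper $\prec$-initial segment use Theorem~\ref{preservation of cofinality} to conclude it is not $<$-cofinal and hence has $<$-order type some $\mu<\beta$, so its traversal order type is bounded by $\zeta_\mu$. The only difference is cosmetic: you explicitly appeal to Theorem~\ref{subsets-stable-under-traversals} to justify that the induced subgraph on $I$ has algorithmic traversal agreeing with $\prec\restr I$, whereas the paper simply asserts that $(S_\gamma,E,<)$ has traversal order type $\gamma$.
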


	\begin{proof}
		Because $\zeta$ is non-decreasing, $\sup_{\alpha<\beta} \zeta_\alpha \leq \zeta_\beta$.  We need to show $\zeta_\beta \leq \sup_{\alpha<\beta} \zeta_\alpha$.
		
		Consider any wellordered connected graph $(V,E,<)$ with input order type $\beta$.
		We want to show its algorithmic traversal order type $\Gamma$ satisfies $$\Gamma \leq \sup_{\mu < \beta} \zeta_\mu.$$
		Since $\Gamma$ is limit by Corollary \ref{limit order preservation}, 
		it suffices to show that $$(\forall \gamma < \Gamma)(\exists \mu < \beta) \  \gamma \leq \zeta_\mu.$$
		
		Towards which, fix arbitrary $\gamma < \Gamma$. Consider the set $S_\gamma \subseteq V$. Since it is not cofinal in the traversal ordering, by Theorem \ref{preservation of cofinality} it is also not cofinal in the input ordering. In particular, $(S_\gamma,<)$ has order type $\mu < \beta$.   
        However, $(S_\gamma,E,<)$ has traversal order type $\gamma$, 
        so $\gamma \le \zeta_\mu$, which concludes the proof.
	\end{proof}

	\begin{thm}
		\label{sum bound}
		For $\alpha, \gamma \in \ord$, $\zeta_{\alpha+\gamma} \leq \zeta_{\alpha}\cdot\zeta_{1+\gamma}$
	\end{thm}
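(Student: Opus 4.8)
The plan is to fix an arbitrary connected wellordered graph $(V,E,<)$ of order type $\alpha+\gamma$ (we may assume $\alpha\ge 1$), with algorithmic traversal $\prec$ and traversal least neighbour function $p^\star$, write $V=A\sqcup B$ where $A$ is the $<$-initial segment of order type $\alpha$ (so $v_0\in A$) and $B$ the $<$-end segment of order type $\gamma$, and then produce a partition of $(V,\prec)$ into intervals — one block for $v_0$ and one block for each element of $B$ — to which Theorems \ref{subsets-stable-under-traversals} and \ref{quotients-stable-under-traversals} both apply. The structural fact that makes this work is: \emph{for $w\in A\setminus\{v_0\}$, every element of $B$ that is $\prec w$ is in fact $\preceq p^\star(w)$.} Indeed $p^\star(w)\prec w$ by Lemma \ref{traversals are decreasing}, and at every stage strictly between the stage at which $p^\star(w)$ enters $S$ and the stage at which $w$ enters $S$, the vertex $w$ is already an available neighbour of the current set; since $w\in A$ is $<$-below every element of $B$, the algorithm cannot select an element of $B$ at such a stage. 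Iterating this along the (necessarily finite, since $\prec$-descending) $p^\star$-orbit $w\succ p^\star(w)\succ\cdots\succ v_0$ shows that the set of elements of $B$ which are $\prec w$, if nonempty, has a $\prec$-greatest element $\beta(w)$ — the first element of $B$ encountered on that orbit — and that no element of $B$ lies strictly $\prec$-between $\beta(w)$ and $w$.

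Letting $b_0\prec b_1\prec\cdots$ enumerate $B$ in the $\prec$-order, I would then set $W_0=\{w:w\prec b_0\}=\{w\in A:\beta(w)\text{ undefined}\}$ and $W_{b_\xi}=[b_\xi,b_{\xi+1})_\prec=\{b_\xi\}\cup\{w\in A:\beta(w)=b_\xi\}$, with $W_{b_\xi}=[b_\xi,\to)_\prec$ if $b_\xi$ is $\prec$-largest in $B$. The structural fact shows these are a partition of $V$ into $\prec$-intervals. Each block is closed under $p^\star$ except at its $\prec$-least element ($v_0$, resp.\ $b_\xi$): $W_0$ is a $\prec$-initial segment, and for $W_{b_\xi}$ this is exactly the structural fact, since for $w\in W_{b_\xi}\setminus\{b_\xi\}$ we have $b_\xi=\beta(w)\preceq p^\star(w)\prec w\prec b_{\xi+1}$. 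Each block is then connected, because by this closure the $p^\star$-orbit of any of its vertices reaches the block's least element without leaving the block, tracing a path in $E^\star\subseteq E$. So the hypotheses of both stability theorems are satisfied.

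Now I would apply the theorems. By Theorem \ref{subsets-stable-under-traversals}, $\prec$ restricted to a block $W$ is the algorithmic traversal of the induced subgraph starting at the block's least element; reordering that subgraph to put its least element first turns it into an ordinary (start-at-$<$-least) algorithmic traversal of a connected graph of order type $1+\operatorname{otp}(W\cap A,<)\le\alpha$ — for $W_0$ because $W_0\subseteq A$, and for $W_{b_\xi}$ because $W_{b_\xi}\cap A\subseteq A$ omits $v_0$, the ``$+1$'' being absorbed when $\alpha$ is infinite and compensated by properness when $\alpha$ is finite — so by Proposition \ref{zeta increasing} each block has $\prec$-order type $\le\zeta_\alpha$. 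By Theorem \ref{quotients-stable-under-traversals}, the quotient graph (blocks ordered by the $<$-rank of their least elements, edges inherited) has algorithmic traversal $\triangleleft$ with $W_i\triangleleft W_j$ iff their least elements are $\prec$-ordered, so $\operatorname{otp}(\triangleleft)$ is precisely the number of blocks listed in $\prec$-order; but this quotient graph is connected of order type $1+\gamma$, so that number is $\le\zeta_{1+\gamma}$. Since the blocks are consecutive $\prec$-intervals partitioning $V$, $(V,\prec)$ has order type a sum of at most $\zeta_{1+\gamma}$ ordinals each at most $\zeta_\alpha$, hence at most $\zeta_\alpha\cdot\zeta_{1+\gamma}$; taking the supremum over all connected graphs of order type $\alpha+\gamma$ gives $\zeta_{\alpha+\gamma}\le\zeta_\alpha\cdot\zeta_{1+\gamma}$. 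The step I expect to be most delicate is the structural fact and its iteration along $p^\star$-orbits: it is what forces the $A$-vertices to ``hang off'' the immediately preceding element of $B$, and hence what simultaneously makes the blocks $\prec$-intervals, keeps them $p^\star$-closed, and keeps each block's reordered order type at most $\alpha$.
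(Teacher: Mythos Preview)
Your proof is correct and is essentially the paper's own argument. You partition $V$ into the $<$-initial segment $A$ (the paper's $S$) and end segment $B$ (the paper's $T$), cut $(V,\prec)$ into half-open $\prec$-intervals at the elements of $B$, verify $p^\star$-closure of each interval except at its left endpoint, and then invoke Theorems~\ref{subsets-stable-under-traversals} and~\ref{quotients-stable-under-traversals} exactly as the paper does. Your ``structural fact'' --- that no element of $B$ can be selected strictly between $p^\star(w)$ and $w$ when $w\in A$ --- is the same observation the paper makes when it argues that there is no edge from $V_i\setminus\{t_i\}$ to any earlier block $V_j$ (since such an edge would force $v\in V_i$ to be added before $t_i$, as $v<t_i$); the paper's version is phrased in terms of edges and is nominally a bit stronger, but both yield the needed $p^\star$-closure by the same one-line comparison at the choice step. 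Your handling of the block-size bound (moving the root $b_\xi$ to the front and noting $W_{b_\xi}\setminus\{b_\xi\}\subseteq A\setminus\{v_0\}$, so the reordered input has type at most $\alpha$) likewise matches the paper's embedding of $V_i$ into $S$.
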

	
	\begin{proof}
		It suffices to show that the order type of the algorithmic traversal of any connected graph $(V,E,<)$ of order type $\alpha + \gamma$ is at most $\zeta_{\alpha}\cdot\zeta_{1+\gamma}$.  Partition $V$ into $S\cup T$ such that $(S,<)$ has order type $\alpha$, $(T,<)$ has order type $\gamma$, and every element of $T$ is $<$-greater than every element of $S$.  Consider the ordered partition of $V$ induced by the set of all half-open intervals in $(V,\prec)$ between two consecutive elements of $T$ that contain the left endpoint in $T$, but not the right. (This partition also includes the set of all elements less than the least element of $T$.)
        Let $V_i$ range over the elements of this partition, and let $t_i \in T \cup \{v_0\}$ be the left endpoint of $V_i$.
		
		For any ordinals $j < i$, there are no edges between $V_i$ and $V_j$ except possibly those involving $t_i$. If there were an edge between some $v \in V_i \setminus \{t_i\}$ and some element of $V_j$, then $v$ must necessarily be added to the traversal before $t_i$, since $v < t_i$ in the input order and traversal initial segment $\bigcup_{k < i} V_k$ neighbors both.  Consequently, the subgraph induced by any partition $V_i$ must be connected, otherwise the subgraph induced by $\bigcup_{j\le i} V_j$ would not be connected, despite being a traversal-initial set. Moreover, $V_i$ is (trivially) closed under the traversal least-neighbor function except at its left endpoint $t_i$.
		
		Therefore, by Lemma \ref{subsets-stable-under-traversals}, the order $(V_i,\prec)$ is exactly the algorithmic traversal of the subgraph induced by $V_i$ starting at $t_i$. In particular, their order types are identical.
        
        Since each $V_i \setminus\{t_i\} \subseteq S \setminus \{v_0\}$, there is an order-preserving embedding $V_i \to S$ taking $t_i$ to be the least instead of greatest element of $V_i$. Therefore, the input order type when we traverse the subgraph induced by $V_i$ starting at $t_i$ is at most $\alpha$, and thus the traversal order type, and the order type of $(V_i,\prec)$ is at most $\zeta_\alpha$.
        
        We have bounded the order type of each interval $(V_i,\prec)$, but to bound the order type of $(V,\prec)$ it remains to determine the order type of the $V_i$'s. Slightly abusing notation, say that $V_i \prec V_j$ in case the interval $V_i$ precedes the interval $V_j$ in $(V,\prec)$.
        
        Consider the ``quotient graph'' $G$ obtained from $(V,E)$ as follows. The vertices of $G$ are identified with the sets $V_i$, and there is an edge between $V_i$ and $V_j$ in $G$ exactly when they are connected in $(V,E)$. Order the vertices of $G$ by defining $V_i < V_j$ when $t_i < t_j$. Then by Lemma \ref{quotients-stable-under-traversals} the relative order $\prec$ of the $V_i$'s is exactly the algorithmic traversal $\triangleleft$ of $(G,<)$.\
        
        Since the order type of $(G,<)$ is at most $1+\gamma$, the order type of $(G,\triangleleft)$ is at most $\zeta_{1+\gamma}$. Since the order type of each $(V_i,\prec)$ is at most $\zeta_\alpha$ and the relative order of the $V_i$'s is at most $\zeta_{1+\gamma}$, the order type of $(V,\prec)$ is at most $\zeta_\alpha \cdot \zeta_{1+\gamma}$, concluding the proof.
	\end{proof}

	In the following proofs we will need some standard facts about ordinals, which we collect below:
    \begin{itemize}
    \item Every ordinal $\alpha$ of cofinality $\kappa \ge \omega$ can be uniquely expressed as $\kappa \cdot \beta$ for some ordinal $\beta$.
    \item Every ordinal $\alpha$ can be uniquely expressed as $\omega \cdot \beta + n$ for some ordinal $\beta$ and $n < \omega$.
    \item Every ordinal $\alpha$ of cofinality $\kappa \ge \omega$ can be partitioned into $\kappa$ many subsets each of order type $\alpha$.
    \end{itemize}

    Furthermore, in what follows, we say that, e.g., $\zeta_\alpha$ is \emph{realized} in case there is a graph of order type (equivalently, on the ordinal) $\alpha$ the order type of whose algorithmic traversal is $\zeta_\alpha$.
    
	\begin{prop}
		\label{multiplication by finite part}
		For any infinite cardinal $\kappa$, ordinal $\sigma$ of cofinality $\kappa$, and ordinal $\nu$ with cardinality at most $\kappa$, 
		if $\zeta_{\sigma}$ and $\zeta_{1+\nu}$ are realized, then
		$$\zeta_{\sigma+\nu} = \zeta_{\sigma} \cdot \zeta_{1+\nu}$$
		and $\zeta_{\sigma+\nu}$ is realized.
	\end{prop}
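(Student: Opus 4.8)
The upper bound is immediate: Theorem~\ref{sum bound} with $\alpha=\sigma$ and $\gamma=\nu$ gives $\zeta_{\sigma+\nu}\le\zeta_\sigma\cdot\zeta_{1+\nu}$. So the whole task is to build one connected wellordered graph of order type $\sigma+\nu$ whose algorithmic traversal has order type $\zeta_\sigma\cdot\zeta_{1+\nu}$; such a graph at once furnishes the reverse inequality and witnesses that $\zeta_{\sigma+\nu}$ is realized. This is exactly where the hypotheses that $\zeta_\sigma$ and $\zeta_{1+\nu}$ are realized get used: ordinal multiplication is discontinuous in its first argument, so one genuinely needs actual realizers and not merely suprema.

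I would first record an ordinal fact: $\sigma$ can be partitioned into $|1+\nu|$ sets each of order type $\sigma$. Indeed $\sigma=\kappa\cdot\beta$ since $\cf(\sigma)=\kappa$; because $\kappa$ is a cardinal and $|1+\nu|\le\kappa$, split $\kappa$ into $|1+\nu|$ subsets of full cardinality (each then of order type $\kappa$) and transport them along $\kappa\cdot\beta\cong(\beta\times\kappa,<_{\lex})$. Now fix a connected $G$ of order type $\sigma$ realizing $\zeta_\sigma$, with $<$-least vertex $g_0$, and a connected $H$ of order type $1+\nu$ realizing $\zeta_{1+\nu}$. Build $K$ on the set $S\sqcup T$, wellordered so that all of $T$ lies above all of $S$ and $\operatorname{ot}(S)=\sigma$, $\operatorname{ot}(T)=\nu$, so $\operatorname{ot}(K)=\sigma+\nu$. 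Using the partition fact, cut $S$ into $|1+\nu|$ pieces each of order type $\sigma$, one per vertex of $H$, assigning the piece containing $v_0:=\min S$ to the $<_H$-least vertex. For each vertex $i$ of $H$ choose a ``root'' $r_i$: for the $<_H$-least vertex take $r_i=v_0$; otherwise take $r_i$ to be a fresh element of $T$, chosen so that $i\mapsto r_i$ carries $<_H$ to the order that $K$'s wellorder induces on the roots. Let the block $V_i$ be $\{r_i\}$ together with the piece assigned to $i$. On each block install a copy of $G$ whose image of $g_0$ is $r_i$ — transport $G$'s edges along an order isomorphism of the remaining vertices and connect $r_i$ exactly as $g_0$ is connected in $G$ — and add an edge between $r_i$ and $r_j$ whenever $i,j$ are adjacent in $H$; this makes $K$ connected.

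Two observations then close the argument. First, deterministic search of the subgraph induced on a block $V_i$, \emph{started at $r_i$}, reproduces the algorithmic traversal of $G$ step for step, and so has order type $\zeta_\sigma$ — even for the blocks where $r_i$ is the \emph{greatest} element of $V_i$ rather than the least — because the start vertex is consumed at step one and never again becomes a candidate, so only the order on the remaining vertices matters, and that is a copy of $G$ minus its least vertex. Second, by transfinite induction on the blocks, taken in the order $\triangleleft$ in which $H$'s algorithmic traversal visits its vertices, the traversal of $K$: (i) can enter $V_i$ only through $r_i$, the unique vertex of $V_i$ with a neighbour outside $V_i$; (ii) visits the blocks in exactly the order $\triangleleft$, since the roots are wellordered like $H$'s vertices, so at each stage the least available root is the one $H$'s search would pick next; and (iii) never abandons a block before exhausting it, since any still-unexplored candidate inside the current block lies in $S$ whereas every other available root lies in $T$ and so loses the comparison. (With this interval structure in hand, Theorems~\ref{subsets-stable-under-traversals} and~\ref{quotients-stable-under-traversals} may alternatively be invoked to handle the last step.) Hence the traversal of $K$ concatenates, in order type $\zeta_{1+\nu}$, blocks each of order type $\zeta_\sigma$, so has order type $\zeta_\sigma\cdot\zeta_{1+\nu}$, as required.

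The point requiring care — and the reason both the partition fact and the ``search from a non-least vertex'' observation are needed — is the order-type bookkeeping: one wants $|1+\nu|$ blocks, each as costly as a $\zeta_\sigma$-realizer, yet the input order type must be only $\sigma+\nu$, far below the naive concatenation $\sigma\cdot(1+\nu)$. This forces the blocks to be interleaved throughout $S$ with the roots of the $T$-indexed blocks sitting at the top of their blocks, and the real work is verifying that this interleaving is still compatible with the traversal processing one whole block at a time, in the order dictated by $H$.
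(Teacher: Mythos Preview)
Your proof is correct and follows essentially the same construction as the paper: partition the initial $\sigma$-segment into $|1+\nu|$ copies of $\sigma$, plant a realizer of $\zeta_\sigma$ on each, attach one distinguished vertex per block taken from the tail of order type $\nu$ (plus the global minimum), and wire those distinguished vertices together as a copy of a realizer of $\zeta_{1+\nu}$; the traversal then exhausts each block (contributing $\zeta_\sigma$) before moving to the next in $H$'s traversal order (contributing the factor $\zeta_{1+\nu}$). The only cosmetic difference is that the paper hangs each copy of $G_1$ off its block-root $t_i$ by a single pendant edge (yielding $1+\zeta_\sigma=\zeta_\sigma$ per block) and then invokes Theorems~\ref{subsets-stable-under-traversals} and~\ref{quotients-stable-under-traversals} directly, whereas you identify $r_i$ with the image of $g_0$ and argue by hand that the starting vertex's position in the order is irrelevant once it is consumed---both routes arrive at the same conclusion.
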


	\begin{proof}
		By Theorem \ref{sum bound}, $$\zeta_{\sigma + \nu} \leq \zeta_{\sigma} \cdot \zeta_{1+\nu},$$ so it suffices to construct a graph on $\sigma + \nu$ the order type of whose algorithmic traversal is $\zeta_{\sigma} \cdot \zeta_{1+\nu}$.
		
		Partition $\sigma + \nu$ into $S \cup T$, where $S$ is the initial copy of $\sigma$ but without the element $0$, and $T$ is the final copy of $\nu$, with $0$. Then $S$ and $T$ have order types $\sigma$ and $1 + \nu$ respectively. By assumption there are connected ordered graphs $G_1$ and $G_2$ on the ordinals $\sigma$ and $1+\nu$ realizing $\zeta_\sigma$ and $\zeta_{1+\nu}$ respectively.
        
        Since $\sigma$ has cofinality $\kappa$, and $|1+\nu|\le\kappa$, we may partition $S$ into $|1+\nu|$ many subsets $S_i$ each of order type $\sigma$. Moreover there is a bijection associating each set $S_i$ with an element of $T$. For each $i$, connect the least element of $S_i$ to the corresponding element $t_i \in T$. The resulting graph $G$ is connected, and we claim that it has the desired property.
        
    	 Let $\prec$ be the algorithmic traversal of $G$ and $V_i = {t_i}\cup S_i$. Then we claim that each set $V_i$ is an interval in $\prec$. This is simply because every element of $S_i$ must occur after $t_i$, since all paths from $0$ to $S_i$ go through $t_i$, but every element of $S_i$ must precede $t_{i+1}$, the $\prec$-next element of $T$, because every element of $S_i$ is connected to $t_i$ via elements strictly smaller than $t_{i+1}$.
        
   	     Furthermore, each $V_i$ is closed under the $\prec$-least neighbor function except at $t_i$, for the simple reason that the \emph{only} neighbors of $S_i$ lie in $V_i$. Finally, each $V_i$ induces a connected subgraph of $G$, namely a copy of $G_1$ plus a single edge.
        
  	   Therefore, the hypotheses of Lemmas \ref{subsets-stable-under-traversals} \ref{quotients-stable-under-traversals} are satisfied. By Lemma \ref{subsets-stable-under-traversals}, the algorithmic traversal starting at $t_i$ of the ordered subgraph induced by $V_i$ is exactly $(V_i,\prec)$. But the order type of this traversal is exactly $1+\zeta_\sigma$.
        
  	   On the other hand, by Lemma \ref{quotients-stable-under-traversals}, the relative order among the $V_i$'s in the algorithmic traversal of $G$ can be determined by collapsing each to a single point, ordering them by their least element, connecting $V_i$ and $V_j$ whenever there was an edge between them in the original graph, and algorithmically traversing the resulting graph. But the this graph is exactly $G_2$, so the relative order among the $V_i$'s is exactly $\zeta_{1+\nu}$.
        
		Therefore, the algorithmic traversal order type of the entire graph $G$ is $(1+\zeta_\sigma)\cdot \zeta_{1+\nu}$. Notice that since $\sigma$ has limit order type by assumption, $\zeta_\sigma$, being the algorithmic traversal order type of $G_1$ is also limit by Corollary \ref{preservation of cofinality}. Hence $\zeta_\sigma = 1+\zeta_\sigma$, which explains the apparent discrepancy between what we stated and what we proved.
		
	\end{proof}

    Now we are in a position to fully resolve the question of upper bounds for algorithmic traversals.
	
	\begin{thm}
		\label{characterization of zeta}
		For any infinite ordinal $\alpha = \omega \beta + n$, $\zeta_\alpha = \omega^\beta \cdot (n+1)$ and $\zeta_\alpha$ is realized.
	\end{thm}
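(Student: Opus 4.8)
The plan is to prove, by transfinite induction on $\beta$ (handling all finite $n$ at once for each $\beta$), that $\zeta_{\omega\beta+n}=\omega^\beta\cdot(n+1)$ and that this value is realized.

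\medskip

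\noindent\emph{Reduction to $n=0$, and the upper bound.} The first step is to peel off the finite part. Since a finite connected graph has a traversal of exactly its vertex count, $\zeta_m=m$ for every finite $m$; hence Theorem~\ref{sum bound} gives $\zeta_{\omega\beta+n}\le\zeta_{\omega\beta}\cdot\zeta_{1+n}=\zeta_{\omega\beta}\cdot(n+1)$, and Proposition~\ref{multiplication by finite part}, applied to $\omega\beta+n=(\omega\beta)+n$ (here $\sigma=\omega\beta$ has cofinality $\ge\omega$, $\nu=n$ has finite cardinality, and $\zeta_{1+n}=n+1$ is realized by a path), upgrades this to $\zeta_{\omega\beta+n}=\zeta_{\omega\beta}\cdot(n+1)$ with realization, provided only that $\zeta_{\omega\beta}$ itself is realized. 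So everything reduces to the claim that $\zeta_{\omega\beta}=\omega^\beta$ and is realized, for every $\beta\ge1$. The upper bound $\zeta_{\omega\beta}\le\omega^\beta$ is then easy: for $\beta=1$ it is Lemma~\ref{cardinal traversal order}; for $\beta=\gamma+1$, Theorem~\ref{sum bound} and the inductive hypothesis give $\zeta_{\omega\gamma+\omega}\le\zeta_{\omega\gamma}\cdot\zeta_{1+\omega}=\omega^\gamma\cdot\omega=\omega^{\gamma+1}$; and for limit $\beta$, Corollary~\ref{limit bound} expresses $\zeta_{\omega\beta}$ as $\sup_{\mu<\omega\beta}\zeta_\mu$, which by the inductive hypothesis is $\sup_{\gamma<\beta}\omega^\gamma=\omega^\beta$.

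\medskip

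\noindent\emph{Realization.} The cases $\beta=1$ (take a ray) and $\beta=\gamma+1$ (Proposition~\ref{multiplication by finite part} on $\omega\beta=\omega\gamma+\omega$, with $\sigma=\omega\gamma$ of cofinality $\ge\omega$ and $\nu=\omega$, using that $\zeta_{\omega\gamma}$ and $\zeta_\omega$ are realized) are immediate. This leaves $\beta$ a limit ordinal; set $\kappa=\cf(\beta)$. When $\beta$ is additively indecomposable I would construct a witness directly as a \emph{spine of gadgets}: choose $(\mu_\xi)_{\xi<\kappa}$ strictly increasing and cofinal in $\beta$, put $\beta_\xi=\sum_{\eta<\xi}\mu_\eta$ (each $<\beta$, by additive indecomposability and $|\xi|<\kappa$), cut $\omega\beta$ into the consecutive intervals $I_\xi$ of order type $\omega\mu_\xi$, place on each $I_\xi$ the graph realizing $\zeta_{\omega\mu_\xi}=\omega^{\mu_\xi}$ (inductive hypothesis, as $\mu_\xi<\beta$), and join the least vertex of $I_\xi$ to the least vertex of $I_{\xi+1}$, and at limit stages to the root. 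By Lemmas~\ref{subsets-stable-under-traversals} and~\ref{quotients-stable-under-traversals} the algorithmic traversal runs through the $I_\xi$ in order, each internally reproducing its own witness, so its order type is $\sum_{\xi<\kappa}\omega^{\mu_\xi}$; since the $\mu_\xi$ are cofinal in $\beta$ and each partial sum of the $\omega^{\mu_\xi}$ stays below $\omega^\beta$ (again additive indecomposability, now of $\omega^\beta$, together with $|\xi|<\kappa=\cf(\omega^\beta)$), this sum equals $\omega^\beta$. When $\beta$ is additively decomposable I would instead produce a splitting $\beta=\beta_\sigma+\beta_\nu$ with $\beta_\sigma,\beta_\nu$ nonzero and $<\beta$ and $\cf(\omega\beta_\sigma)\ge|\omega\beta_\nu|$ — taking $\beta_\nu$ to be a tail of the Cantor normal form of $\beta$, and, when necessary, padding $\beta_\sigma$ with a small summand of large cofinality — and conclude by Proposition~\ref{multiplication by finite part} on $\omega\beta=\omega\beta_\sigma+\omega\beta_\nu$ that $\zeta_{\omega\beta}=\zeta_{\omega\beta_\sigma}\cdot\zeta_{1+\omega\beta_\nu}=\omega^{\beta_\sigma}\cdot\omega^{\beta_\nu}=\omega^\beta$, realized.

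\medskip

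\noindent\emph{The main obstacle.} The hard part will be exactly this last (limit, additively decomposable) case: arranging the cofinality and cardinality bookkeeping so that Proposition~\ref{multiplication by finite part} applies with both factors strictly below $\omega\beta$ (so that the inductive hypothesis is available). An adequate splitting need not exist — it fails, for example, when the least Cantor-normal-form exponent of $\beta$ is infinite but not a regular cardinal — and such configurations will require a direct construction in the spirit of the indecomposable case, carried out on a non-indecomposable ordinal, leaning on the structural facts recalled above (that an ordinal of cofinality $\kappa$ equals $\kappa\cdot\rho$ for some $\rho$, and that every uncountable cardinal is a fixed point of $\delta\mapsto\omega^\delta$, so that Lemma~\ref{cardinal traversal order} can absorb those cases). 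Once $\zeta_{\omega\beta}=\omega^\beta$ is realized for all $\beta$, the theorem follows from the first step.
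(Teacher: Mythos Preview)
Your reduction to $n=0$, the upper-bound argument, and the successor step all match the paper exactly (the paper also invokes Proposition~\ref{multiplication by finite part} on $\omega\beta=\omega\gamma+\omega$). The divergence is entirely in the limit realization step, and there you have a genuine gap. Your spine-of-consecutive-intervals construction is correct when $\beta$ is additively indecomposable, and for good reason: you need $\sum_{\xi<\kappa}\omega\mu_\xi=\omega\beta$ exactly, which forces the partial sums $\sum_{\eta<\xi}\mu_\eta$ to stay below $\beta$. For decomposable $\beta$ you correctly observe that the splitting approach via Proposition~\ref{multiplication by finite part} can fail---your own example of a least CNF exponent that is a singular cardinal (e.g.\ $\beta=\omega_\omega\cdot 2$) is exactly right, and no amount of ``padding $\beta_\sigma$'' rescues it, since the required inequality $|\omega\beta_\nu|\le\cf(\omega\beta_\sigma)$ asks for a regular cardinal $\ge\aleph_\omega$ while $|\beta_\sigma|\le\aleph_\omega$. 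But you then defer to an unspecified ``direct construction in the spirit of the indecomposable case,'' and the structural facts you cite (the $\kappa\cdot\rho$ factorization, Lemma~\ref{cardinal traversal order}) do not obviously produce one.

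The paper sidesteps the whole dichotomy. Instead of cutting $\alpha=\omega\beta$ into \emph{consecutive} intervals---which is what forces indecomposability---it uses the standard fact (listed just before Proposition~\ref{multiplication by finite part}) that any ordinal of cofinality $\kappa$ can be partitioned into $\kappa$ many \emph{interleaved} subsets each of order type $\alpha$. On each part $p_\gamma$ (indexed by a cofinal $C\subseteq\alpha$ of order type $\kappa$) it places, on an initial segment, a witness realizing $\zeta_\gamma$, and connects all least elements pairwise. By Theorem~\ref{subsets-stable-under-traversals} the algorithmic traversal of the connected component of $0$ restricts to the witness on each $p_\gamma$, so its order type is at least $\sup_{\gamma\in C}\zeta_\gamma=\zeta_\alpha$; monotonicity of $\zeta$ (Proposition~\ref{zeta increasing}) then promotes this to a realization on all of $\alpha$. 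This is uniform over all limit $\beta$---no additive-indecomposability hypothesis, no CNF bookkeeping---and is exactly the missing ingredient in your decomposable case.
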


	\begin{proof}
		Any ordinal $\alpha$ may be written as $\omega \beta + n$ for a unique ordinal $\beta$ and finite ordinal $n$, so the term $\omega^\beta\cdot(n+1)$ is well-defined as a function of $\alpha$.
        We will prove that it is equal to $\zeta_\alpha$ by induction over infinite $\alpha$. The base case occurs when $\alpha = \omega$, and $\zeta_\omega = \omega$ is a consequence of Lemma \ref{cardinal traversal order}. ($\zeta_\alpha = \alpha$ for finite $\alpha$ is a consequence of the same.)
        
        Suppose that the theorem holds at all ordinals less than $\alpha = \omega\beta + n$. If $n > 0$ then by induction $\zeta_{\omega\beta}=\omega^\beta$ and it is realized. Moreover $\zeta_{n+1} = n+1$ and it is realized. The conclusion follows from Proposition \ref{multiplication by finite part}.
        
        On the other hand, if $n=0$, we split into cases depending on whether $\beta$ is a successor ordinal. If it is, and $\gamma + 1 = \beta$, then $\alpha = \omega\gamma + \omega$, and by induction $\zeta_{\omega\gamma} = \omega^\gamma$ and $\zeta_{1+\omega} = \omega$, and both are realized. The conclusion again follows from Proposition \ref{multiplication by finite part}.
        
        Finally, suppose that $n=0$ and $\beta$ is a limit ordinal. In this case $\alpha = \sup_{\gamma<\beta}{\gamma\omega}$, so by Corollary \ref{limit bound} and the inductive hypothesis, $$\zeta_\alpha = \sup_{\gamma<\beta}{\omega^\gamma}.$$ But the right hand side is exactly $\omega^\beta$.
        
        It remains to show that in this case, $\zeta_\alpha$ is realized. We will construct a graph that has induced subgraphs with order type $\zeta_\gamma$ for $\gamma$ cofinal in $\alpha$.
			
			Let the cardinal $\kappa$ be the cofinality of $\alpha$, and let $C \subseteq \alpha$ be cofinal with order type $\kappa$. Let $P$ be a partition of $\alpha$ of size $\kappa$, each element of which has order type $\alpha$, and fix a bijection between $\kappa$, $P$, and $C$. Let $\gamma$ range over elements of $C$, and $p_\gamma$ be the corresponding element of $P$.
            
            By induction, for each $\gamma$, there is a graph $G_\gamma$ on $\gamma$ with traversal order type $\zeta_\gamma$. Since each $p_\gamma$ has order type $\alpha$, there is an order-preserving injection from $\gamma$ to $p_\gamma$, and this allows us to copy of $G_\gamma$ on an initial segment of $p_\gamma$. (The resulting graph will not be connected, but this will not matter.)
            
            Finally, for any two elements of $P$, put an edge between their least elements. The resulting graph is not connected, but the algorithmic traversal of the connected component of 0 will have order type $\zeta_\alpha$, which suffices as $\lambda \mapsto \zeta_\lambda$ is monotone.
            
            To see this, note that the subgraph induced by any $p_\gamma$ is connected as a subgraph of the connected component of 0, and moreover $p_\gamma$ is closed under the least neighbor function except at its least element. Therefore by Lemma \ref{subsets-stable-under-traversals}, the algorithmic traversal of this subgraph, which has order type $\zeta_\gamma$, is the restriction of the algorithmic traversal of the whole graph to $p_\gamma$. Therefore, the algorithmic traversal of the whole graph has order type at least $\sup_{\gamma \in C}{\zeta_\gamma}$, but this is equal to $\zeta_\alpha$ by Corollary \ref{limit bound}.
            		
	\end{proof}

	\section{Algorithmic traversals are lexicographically extremal} \label{Lex-stuff}

	In this section, we ask if there is some canonical way to identify the algorithmic traversal within the space of all wellordered traversals of some wellordered graph. We find that there are two very natural answers to the first question, one that applies in general, and the other that applies only to finite graphs:
    
    \begin{itemize}
    \item The algorithmic traversal is \emph{lexicographically minimal} among all traversals of $(V,E,<)$.
    \item If $V$ is finite, the \emph{inverse} of the algorithmic traversal is \emph{colexicograhically maximal} among all traversals of $(V,E,<)$. (Colexicographic means ``reverse lexicographic.'')
    \end{itemize}
    
    Some explanation is in order. How do we compare two wellordered traversals lexicographically? While we usually regard a linear wellorder as a subset of $V^2$, we could of course also regard it as a bijection $t : \alpha \to V$ for some ordinal $\alpha$. Then given two such orders $t_1$ and $t_2$, we say that $t_1$ is lexicographically prior in case $t_1(\beta) < t_2(\beta)$, where $\beta$ is the least ordinal $\gamma$ satisfying $t_1(\gamma) \neq t_2(\gamma)$. (It cannot be the case that one is a strict subsequence of the other.)
    
    When $V$ is finite, the only possible ordinal $\alpha$ that occurs as the domain of a linear order is of course $n = |V|$. In this case, the set of algorithmic traversals of $V$ can be identified with a set of functions $n \to V$, or as a subset of $S_n$ if we identify $i \in n$ with the $i$-th element of $(V,<)$.
    
    \begin{prop} \label{lex-minimal}
    	The algorithmic traversal of a wellordered ordered connected graph is lexicographically least amongst all its traversals.
    \end{prop}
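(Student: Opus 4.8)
The plan is to run a minimal-counterexample argument, driven by the one defining feature of deterministic search that matters here: at a successor stage it adjoins the $<$-least vertex lying outside the current stage but having a neighbor inside it. The first step is to restate that feature purely in terms of the order $\prec$. For a non-minimal $v \in V$ write $I_v = \{w : w \prec v\}$ for its proper $\prec$-initial segment; I claim
\begin{equation}\tag{$\star$}
v \;=\; \min_< \{\, u \in V \setminus I_v : u \text{ has a neighbor in } I_v \,\}.
\end{equation}
To justify $(\star)$, observe that the least stage $S_\alpha$ containing $v$ must be a successor ordinal $\alpha = \delta + 1$ — limit stages are unions and $S_1 = \{v_0\}$ — and that unwinding the definition of $\prec$ gives $S_\delta = \{w : w \prec v\} = I_v$. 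Then $(\star)$ is precisely the successor-stage rule applied at $S_\delta$. (That $v$ actually belongs to the displayed set is also Lemma \ref{traversals are decreasing}, since $p^\star(v) \in I_v$ is a neighbor of $v$.)

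Next I would set up the comparison. Suppose toward a contradiction that some traversal is lexicographically prior to the algorithmic traversal. View the algorithmic traversal as a bijection $a : \Gamma \to V$ enumerating $V$ in $\prec$-increasing order, and the competing traversal as a bijection $t : \lambda \to V$ enumerating $V$ in its own increasing order. Since $a$ and $t$ are both bijections onto $V$, neither enumeration can be a proper initial segment of the other, so either $a = t$ (nothing to prove) or there is a least ordinal $\beta$ at which $a(\beta)$ and $t(\beta)$ are both defined and differ; by the assumed lexicographic priority, $t(\beta) < a(\beta)$. Put $I = \{a(\delta) : \delta < \beta\} = \{t(\delta) : \delta < \beta\}$, the common set of first $\beta$ values of the two enumerations; since $a$ enumerates in $\prec$-order, $I = I_{a(\beta)}$.

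Now I close the loop. If $\beta = 0$ then $I = \emptyset$ and $a(0) = v_0$ is the $<$-least vertex of $V$, so $t(0) < v_0$ is impossible; hence $\beta > 0$ and $I \neq \emptyset$. Because $t$ enumerates a traversal, both $I$ and $I \cup \{t(\beta)\}$ are initial segments of that traversal and therefore induce connected subgraphs; since $I$ is nonempty, adjoining the single vertex $t(\beta)$ cannot keep the graph connected unless $t(\beta)$ has a neighbor in $I$ — so it does. Also $t(\beta) \notin I$ by injectivity of $t$. Thus $t(\beta)$ lies in the set on the right-hand side of $(\star)$ for $v = a(\beta)$, whence $a(\beta) \le t(\beta)$, contradicting $t(\beta) < a(\beta)$. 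This shows no traversal is lexicographically prior to the algorithmic one.

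I expect the only genuine content, as opposed to bookkeeping, to be the reformulation $(\star)$, and the main obstacle is making the correspondence between $\prec$-initial segments and the search stages $S_\delta$ precise — i.e., checking that the least stage containing a non-minimal vertex is a successor stage and that $S_\delta = I_v$ there. Everything after $(\star)$ is a one-line invocation of the successor-stage rule together with the elementary observation that adjoining a non-neighboring vertex to a nonempty connected graph disconnects it.
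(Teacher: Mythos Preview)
Your argument is correct and follows the same line as the paper's proof: identify the first ordinal $\gamma$ at which the two enumerations differ, observe that the common initial set $S$ is connected, note that both $t(\gamma)$ and $t'(\gamma)$ are neighbors of $S$ outside $S$, and invoke the successor-stage rule to conclude that the algorithmic choice is $<$-minimal among such neighbors, contradicting $t'(\gamma) < t(\gamma)$. Your reformulation $(\star)$ and the check that $S_\delta = I_{a(\beta)}$ make explicit exactly what the paper leaves implicit in the phrase ``$t(\gamma)$ is the $<$-least neighbor of $S$,'' but the substance is identical.
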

    
    \begin{proof}
    	Fix a wellordered graph $(V,E,<)$, and let $t:\Gamma \to V$ be its algorithmic traversal. Suppose that there is a lexicographically lesser traversal $t':\Gamma' \to V$, and suppose that $\gamma \in \Gamma \cap \Gamma'$ is the least ordinal on which they differ, so that $t'(\gamma) < t(\gamma)$. Let $S$ be the set of vertices $\{t(\delta):\delta < \gamma \}$. Then $S$ induces a connected subgraph and $t(\gamma)$ is the $<$-least neighbor of $S$. However, $t'(\gamma)$ is also neighbor of $S$ lesser than $t(\gamma)$, a contradiction.
    \end{proof}

    Suppose that $X \subseteq S_n$, and $X^{-1}$ is the set of inverses of elements of $X$. Suppose that $\mu^{-1}$ is the lexicographically least element of $X^{-1}$. Then what property does $\mu$ satisfy among all the elements of $X$?
    
    Let $X_0 \subseteq X$ be those set of $\sigma \in S_n$ which minimize $\sigma^{-1}(0)$. For $0\le i <n-1$, let $X_{i+1}$ be the subset of $X_i$ that minimize $\sigma^{-1}(i+1)$. Then $\mu \in X_n$; in fact, it is the unique member.
    
    Similarly, if $\mu^{-1}$ were colexicographically greatest, it is contained in $X_0$, where $X_{n-1} \subseteq X$ is the set of $\sigma \in S_n$ that maximize $\sigma^{-1}(n-1)$, and $X_{i-1}$ is the subset of $X_i$ that maximizes $\sigma^{-1}(i-1)$.
    
    \begin{thm} \label{colex-maximal}
    	The inverse of an algorithmic traversal of a finite ordered graph is colexicographically greatest among all inverses of traversals.
    \end{thm}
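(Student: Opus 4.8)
The plan is to identify the unique traversal $\mu$ of $(V,E,<)$ whose inverse is colexicographically greatest — this exists and is unique because colexicographic order linearly orders the finitely many permutations of $V$ — and to prove that $\mu$ is \emph{greedy}, meaning that at every stage it adjoins the $<$-least vertex on the current frontier. Since a greedy traversal is, by definition of deterministic search, precisely the algorithmic traversal started at its own first vertex, this exhibits the colexicographic maximizer as an algorithmic traversal and so proves the theorem. (As signalled by the indefinite article, and in contrast with Proposition~\ref{lex-minimal}, the search here need not be started at the $<$-least vertex.)

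To show $\mu$ is greedy I would use an exchange argument. Write $S_k=\{\mu(0),\dots,\mu(k-1)\}$ and suppose $k$ is least such that $\mu(k)$ is not the $<$-least vertex of the frontier $\{v\notin S_k:\exists w\in S_k\ E(w,v)\}$; say $\mu(k)=w$ but some $u<w$ lies on this frontier, and put $\ell=\mu^{-1}(u)$, so $\ell>k$. Define a new bijection $\mu'$ by cyclically shifting the block of positions $k,k+1,\dots,\ell$: keep $\mu'(j)=\mu(j)$ for $j<k$ and for $j>\ell$, set $\mu'(k)=u$, and set $\mu'(j)=\mu(j-1)$ for $k<j\le\ell$. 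First I would check $\mu'$ is again a traversal: its initial segments of size $\le k$ or $>\ell+1$ agree with $\mu$'s; the one of size $k+1$ is $S_k\cup\{u\}$, connected since $u$ is on the frontier of $S_k$; and for $k+1<j\le\ell+1$, the initial segment of size $j$ is exactly the $\mu$-initial segment of size $j-1$ with the single vertex $u=\mu(\ell)$ adjoined, which is connected because that $\mu$-segment is connected and contains $S_k$, to which $u$ is adjacent (and $u$ is not already in it, as $j-2<\ell$).

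The crux is then the colexicographic comparison, where the direction of the order must be watched. The only vertices moved by the exchange are $w=\mu(k),\mu(k+1),\dots,\mu(\ell-1)$, each pushed exactly one position \emph{later}, together with $u=\mu(\ell)$, pulled back to position $k$. Because $w>u$, the $<$-largest of the moved vertices is one of $\mu(k),\dots,\mu(\ell-1)$ — not $u$ — and $\mu'$ assigns it a strictly larger position than $\mu$ does. Colexicographic comparison of the sequences $\mu^{-1}$ and $\mu'^{-1}$ is decided exactly at the $<$-largest vertex on which they disagree, and there $\mu'^{-1}$ is strictly larger; hence $\mu'^{-1}$ is colexicographically greater than $\mu^{-1}$, contradicting the choice of $\mu$. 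Therefore $\mu$ is greedy, i.e., $\mu$ is the algorithmic traversal started at $\mu(0)$, which is what we wanted.

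The point most in need of care is this colexicographic bookkeeping: the exchange must \emph{defer} the largest affected vertex while advancing only strictly smaller ones, which is precisely what the greedy rule does locally, so the argument really shows that the locally greedy choice of smallest frontier vertex is globally optimal for the colexicographic order on inverses. The verification that $\mu'$ is still a traversal is routine but should be carried out carefully, since the cyclic shift rearranges an entire block of the order rather than a single position.
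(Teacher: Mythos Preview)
Your exchange argument is correct and considerably more direct than the paper's approach. The cyclic shift preserves the traversal property for exactly the reasons you give, and since $w>u$, the $<$-largest displaced vertex is one of $\mu(k),\dots,\mu(\ell-1)$, each pushed one position later; hence $\mu'^{-1}$ dominates $\mu^{-1}$ at the decisive colexicographic coordinate. The paper instead constructs a descending chain of pointed ordered partitions $\mathbf{P}_n,\dots,\mathbf{P}_0$, arguing inductively that the traversals surviving the first $n-i$ rounds of colexicographic filtering are exactly those consistent with $\mathbf{P}_i$, and that the algorithmic traversal from $0$ lies in every $T_i$.

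Your reading of the indefinite article is not merely permissible but \emph{necessary}. On the graph with vertex set $\{0,1,2,3\}$ and edges $\{0,3\},\{1,3\},\{2,3\},\{1,2\}$, the algorithmic traversal from $0$ is $(0,3,1,2)$, whereas the traversal $(1,2,3,0)$---greedy from vertex $1$---places vertex $3$ strictly later and so has colex-greater inverse. Thus the colex maximizer need not start at $0$. The paper's proof takes as its base case that every traversal is consistent with $\mathbf{P}_n$ rooted at $0$, i.e., that ``they all start with $0$''; this is false, and with it the stronger conclusion the proof aims at. Your argument establishes precisely the correct statement---that the colex maximizer is greedy, hence is the algorithmic traversal started at \emph{its own} first vertex---and does so without the partition machinery.
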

    
    \begin{proof}
    Let $n$ be the size of the graph $(V,E,<)$, and let $T \subseteq S_n$ be the set of its traversals. (Recall that if $R \subseteq V^2$ is a linear order, we identify it with the permutation that takes $i \in n$ to the index in $(V,<)$ of the $i$-th element of $R$.) Define $T_i$, $0 \le i \le n-1$ from $T$ as in the previous paragraph. We show that the algorithmic traversal $a$ is in each $T_i$, and indeed that it is the unique member of $T_0$.
    
    An \emph{pointed ordered partition} of $n = \{0,1,\dots,n-1\}$ is a linearly ordered partition of $n$ each of whose members has one designated element called the \emph{root}. Any pointed ordered partition induces a partial ordering on $n$, in which any two elements in different parts inherit the relative order of those parts, each root is least in its part, but other elements in the same part are not comparable. Given an ordered partition $\mathbf{P}$ and an element $\sigma \in S_n$, we say that $\sigma$ \emph{is consistent with} $\mathbf{P}$ in case $\sigma$ extends the partial order induced by $\mathbf{P}$. (In particular, each part of $\mathbf{P}$ is an interval in $\sigma$ whose left endpoint is the root.)
    
    For consistency in notation, let $T_n = T$. We will define a family of pointed ordered partitions $\mathbf{P}_i$ for $0\le i \le n$ such that $\mathbf{P}_{i-1}$ properly refines $\mathbf{P}_i$, $T_i$ is the set of exactly those traversals consistent with $\mathbf{P}_i$, and $a$ is consistent with each $\mathbf{P}_i$. This shows that $a$ is contained in $T_0$. The fact that it's the unique member comes from the observation that in a list of $n+1$ proper refinements of a partition of $n$, the last partition must be maximally refined (each part is a singleton), and thus is consistent with only one permutation.
    
    Let $\mathbf{P}_n$ be the singleton partition $\{n\}$ and let $0\in n$ be the root of its unique part. All traversals in $T$ are consistent with it, as they all start with 0.
    
    Suppose we know $\mathbf{P}_{i+1}$ for $0 \le i < n$ and we want to define $\mathbf{P}_i$. By induction assume, 
    \begin{itemize}
    \item $\mathbf{P}_{i+1}$ is a proper refinement of $\mathbf{P}_{i+2}$,
    \item each part of $\mathbf{P}_{i+1}$ induces a connected subgraph of $(V,E)$,
    \item deleting the root of each part disconnects the rest of the part from $0$,
    \item the set of roots is exactly $\{0,i+1,i+2,\dots,n-1\}$,
   	\item a traversal is in $T_{i+1}$ iff it's consistent with $\mathbf{P}_{i+1}$, and
    \item the algorithmic traversal $a$ is consistent with $\mathbf{P}_{i+1}$.
    \end{itemize}
    Let $X \in \mathbf{P}_{i+1}$ be the element containing $i$, and let $x$ be its root. By induction, $x \neq i$. Let $G$ be the subgraph of $(V,E)$ induced by $X$, which by induction is connected. Deleting $i$ disconnects $G$ into one or more connected components, one of which contains $x$. Let $X_0$ be that component which contains $x$ and $X_1$ be $X \setminus X_0$. Define $\mathbf{P}_i$ by replacing $X$ in $\mathbf{P}_{i+1}$ by $(X_0,X_1)$, in that order. Let $x$ and $i$ be the roots of $X_0$ and $X_1$ respectively. Then,
    \begin{itemize}
    \item $\mathbf{P}_i$ is a proper refinement of $\mathbf{P}_{i+1}$,
    \item both $X_0$ and $X_1$ induce connected subgraphs of $(V,E)$, and
    \item deleting $i$ disconnects the rest of $X_1$ from 0. (By induction, every path from $j \in X_1$ to $0$ contains $x$, and by construction every path from $j$ to $x$ contains $i$.) Furthermore,
    \item the set of roots is exactly $\{0,i,i+1,i+2,\dots,n-1\}$.
    \end{itemize}
    
    It remains to show that any traversal is in $T_i$ iff it is consistent with $\mathbf{P}_i$, and that $a$ is consistent with $\mathbf{P}_i$. 
    
    In any traversal, $i$ must precede any other element of $X_1$, as deleting $i$ disconnects $X_1$ from $0$. In the algorithmic traversal, all elements of $X_0$ precede $i$, since they follow $x$ in the traversal order (by induction), but are connected to $x$ by a path in $X_0$, all of whose elements are lesser than $i$. Hence the interval $X$ in $a$ factors into $(X_0,X_1)$, with $i$ being the left endpoint of $X_1$, and $a$ is consistent with $\mathbf{P}_i$.
    
    Traversals in $T_i$ are those in $T_{i+1}$ which maximize the index of $i$. Fix $t \in T_i$. By induction, $X$ induces an interval in $t$ with left endpoint $x$. By the previous paragraph, $i$ must precede any element of $X_1$ in $X$, but this maximum index is realized by $a$. Therefore, the interval $X$ in $t$ factors into $(X_0,X_1)$, and $t$ is consistent with $\mathbf{P}_i$.
    
    Conversely, suppose a traversal is consistent with $\mathbf{P}_{i}$. By induction, it is contained in $T_i$, and by assumption, the index of $i$ is as large as possible. Hence, it is contained in $T_{i+1}$, which concludes the induction.
    
    \end{proof}
        
    \paragraph{A different algorithm}    
    The proof of \ref{colex-maximal} suggests an alternative algorithm for computing the algorithmic traversal of a finite connected ordered graph $G$ starting with the vertex $v$:
    \begin{itemize}
    \item Let $w$ be the greatest element of $G \setminus \{v\}$, and let $X$ be the connected component of $v$ in the graph $G \setminus \{w\}$. Let $Y = G \setminus X$
    \item Recursively traverse the ordered graph induced by $X$ starting at $v$ and the ordered graph induced by $Y$ starting at $w$. Concatenate them to obtain the traversal of $G$ starting with $v$. 
    \end{itemize}
    
    This raises many questions. What is the complexity of this algorithm relative to standard graph representations? What happens if we modify the method of picking $w$ from $G\setminus \{v\}$? If we make it nondeterministic, do we recover all traversals as we vary over traces of the algorithm? If we let $w$ be the least element of $G \setminus \{v\}$, is the semantics well-behaved for infinite wellordered graphs? We leave these and other questions open.

	\section{Breadth-first search and traversals} \label{BFT}
    In this section, we replicate many of the results of Sections \ref{basic-results} through \ref{lex-minimal} in the context of breadth-first search and breadth-first traversals. Recall that
	
	\begin{defn}
		A wellordered traversal $\prec$ is \emph{breadth-first} if the least neighbor function is weakly monotone; i.e., for all vertices $v$, $v \preceq w \rightarrow p(v) \preceq p(w)$.
	\end{defn}
	
	We now define \emph{deterministic breadth-first search}, and prove that it computes a wellordered breadth-first traversal given a wellordered graph.
    
	\begin{defn}
		Define the set $B_\alpha$ and the sequence $Q_\alpha$ by transfinite recursion on $\alpha$. (We will variously regard $Q_\alpha$ as both a sequence and its underlying set.) Let $q_\alpha$ be the first element of $Q_\alpha \setminus B_\alpha$, given that it is nonempty.
		\begin{itemize}
			\item $B_0 = \emptyset$ and $Q_0 = (v_0)$.
			\item $B_{\alpha+1} = B_\alpha \cup \{q_\alpha\}$ and $Q_{\alpha+1}$ is obtained from $Q_\alpha$ by adding $\Gamma_{q_\alpha} \setminus Q_\alpha$ to the end in input order.
			\item For limit ordinals $\alpha$, $B_\alpha = \bigcup_{\beta < \alpha }B_\beta$ and $Q_\alpha  = \bigcup_{\beta < \alpha }Q_\beta$. For $\beta < \gamma$, $Q_{\gamma}$ is an end extension of $Q_\beta$, so the ordering on $Q_\alpha$ is inherited from its predecessors.
		\end{itemize}
		We see that for any ordinal $\alpha$, $B_\alpha$ is an initial segment of $Q_\alpha$ and $Q_\alpha$ is a wellorder. The closure ordinal $\Gamma$ is the least ordinal satisfying $B_\Gamma = B_{\Gamma+1}$.
	\end{defn}

	\begin{prop}
		$B_\Gamma = V$
	\end{prop}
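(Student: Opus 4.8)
The plan is to mimic the proof of the earlier lemma $S_\Gamma = V$ for deterministic graph search, exploiting connectedness together with the bookkeeping of the queue $Q_\alpha$. First I would record the two structural facts that make the argument go through: (i) at the closure ordinal we have $Q_\Gamma = B_\Gamma$, and (ii) every element of $B_\Gamma$ equals $q_\beta$ for some $\beta < \Gamma$. Fact (i) follows because $B_{\Gamma+1} = B_\Gamma \cup \{q_\Gamma\}$, so $B_\Gamma = B_{\Gamma+1}$ forces $Q_\Gamma \setminus B_\Gamma$ to be empty; combined with the already-noted inclusion $B_\Gamma \subseteq Q_\Gamma$ this gives $Q_\Gamma = B_\Gamma$. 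Fact (ii) is a routine transfinite induction: the only elements ever inserted into $B$ are the $q_\beta$'s at successor stages, and limit stages merely take unions.

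Then I would argue by contradiction. Suppose $V \setminus B_\Gamma = V \setminus Q_\Gamma$ is nonempty. Since $(V,E)$ is connected and $Q_\Gamma \ni v_0$ is nonempty, there is an edge $E(v,w)$ with $v \in Q_\Gamma = B_\Gamma$ and $w \in V \setminus Q_\Gamma$. By fact (ii), $v = q_\beta$ for some $\beta < \Gamma$. At stage $\beta+1$ the queue is updated by appending $\Gamma_{q_\beta} \setminus Q_\beta$ to $Q_\beta$; since $w$ is a neighbour of $v = q_\beta$, either $w \in Q_\beta$ already or $w \in \Gamma_{q_\beta} \setminus Q_\beta$, and in either case $w \in Q_{\beta+1}$. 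As each $Q_\gamma$ is an end extension of its predecessors, $Q_{\beta+1} \subseteq Q_\Gamma$, so $w \in Q_\Gamma$, contradicting the choice of $w$. Hence $B_\Gamma = V$.

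I do not expect any serious obstacle here: the content is entirely parallel to the generic-search case, with the queue $Q_\alpha$ playing the role of the ``processed-plus-frontier'' set. The only point needing a moment's care is disentangling $B_\alpha$ from $Q_\alpha$ — one must observe that closure is \emph{detected} via $B$ but that $B_\Gamma = Q_\Gamma$ at closure, so that the connectedness argument can legitimately be run on $Q_\Gamma$. A secondary small check is that $q_\beta$ is genuinely defined for every $\beta < \Gamma$, which holds precisely because $\Gamma$ is the \emph{least} ordinal at which the process stabilizes, so that invoking the successor-stage update rule at stage $\beta$ is valid.
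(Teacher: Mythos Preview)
Your proposal is correct and follows essentially the same route as the paper: assume $V\setminus B_\Gamma\neq\emptyset$, use connectedness to find an edge from some $v\in B_\Gamma$ to some $w\notin B_\Gamma$, write $v=q_\beta$ for suitable $\beta<\Gamma$, and conclude $w\in Q_{\beta+1}\subseteq Q_\Gamma$, contradicting closure. The only cosmetic difference is that you front-load the identity $Q_\Gamma=B_\Gamma$ as a preliminary fact and then derive $w\in B_\Gamma$ directly, whereas the paper instead observes at the end that $w\in Q_\Gamma\setminus B_\Gamma$ is nonempty and hence $B_{\Gamma+1}\neq B_\Gamma$; these are two phrasings of the same step.
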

	
	\begin{proof}
		Suppose by contradiction that $W = V \setminus B_\Gamma\neq\emptyset$. Since $(V,E)$ is connected, there is an edge between some $v \in B_\Gamma$ and $w \in W$. Let $\alpha < \Gamma$ satisfy $q_\alpha = v$. Then $w \in Q_{\alpha+1}$, so $w \in Q_\Gamma$. However, then $Q_\Gamma \setminus B_\Gamma$ is nonempty, so $B_{\Gamma+1} \neq B_\Gamma$, a contradiction.
	\end{proof}
	
	\begin{prop}
		The ordering $\prec$ of $V$ induced by $\alpha \mapsto q_\alpha$ is a breadth-first traversal.
	\end{prop}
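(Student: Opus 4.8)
The plan is to extract the whole statement from one monotonicity fact about the search queue, after a round of bookkeeping.

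First I would establish, by routine transfinite induction on the definition, the structural facts: the $q_\alpha$ are pairwise distinct, $B_\alpha=\{q_\beta:\beta<\alpha\}$, and --- because $Q_\gamma$ end-extends $Q_\beta$ whenever $\beta<\gamma$ while $B_\alpha$ is an initial segment of $Q_\alpha$ --- the enumeration $\alpha\mapsto q_\alpha$ simply lists the elements of $Q_\Gamma$ in their queue order. Since $B_\Gamma=V$ by the previous proposition and $B_\Gamma\subseteq Q_\Gamma\subseteq V$, we have $Q_\Gamma=V$; thus $q_\alpha\mapsto\alpha$ is an isomorphism of $(V,\prec)$ with $\Gamma$, so $\prec$ is a wellorder and coincides with the queue order. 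Next, for each nonminimal $q_\alpha$ I would define $\pi(\alpha)$ to be the unique ordinal with $q_\alpha\in Q_{\pi(\alpha)+1}\setminus Q_{\pi(\alpha)}$; this is well-defined because the least $\delta$ with $q_\alpha\in Q_\delta$ is neither $0$ (as $q_\alpha\neq v_0$) nor a limit (limit stages being unions), hence a successor $\pi(\alpha)+1$, and then $q_\alpha\in\Gamma_{q_{\pi(\alpha)}}$, i.e. $q_{\pi(\alpha)}$ is a neighbor of $q_\alpha$. From $q_\alpha\in Q_\alpha$ we get $\pi(\alpha)+1\le\alpha$, so $\pi(\alpha)<\alpha$; and if $q_\gamma$ is any neighbor of $q_\alpha$ then $q_\alpha\in\Gamma_{q_\gamma}\subseteq Q_{\gamma+1}$, forcing $\pi(\alpha)\le\gamma$. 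Hence $q_{\pi(\alpha)}$ is the $\prec$-least neighbor of $q_\alpha$, i.e. $p(q_\alpha)=q_{\pi(\alpha)}$.

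Given this, the traversal property falls out immediately: $p(q_\alpha)=q_{\pi(\alpha)}\prec q_\alpha$ for every nonminimal vertex, so Lemma \ref{traversals are decreasing}, applied to the wellorder $\prec$, shows $\prec$ is a traversal. (Alternatively one can show each $B_\alpha$ induces a connected subgraph by induction, using that $q_\alpha$ has the earlier-processed neighbor $q_{\pi(\alpha)}$.)

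For the breadth-first property it remains to show $p$ is weakly $\prec$-monotone, which unwinds to: $\pi$ is weakly monotone on the nonzero ordinals below $\Gamma$. The one real step --- the crux of the proof --- is the \emph{batch lemma}: if $\pi(\delta)<\pi(\epsilon)$ then $\delta<\epsilon$. To see it, put $\beta=\pi(\delta)<\pi(\epsilon)=\gamma$; then $\beta+1\le\gamma$, so $q_\delta\in Q_{\beta+1}\subseteq Q_\gamma$ whereas $q_\epsilon\notin Q_\gamma$, and since $Q_{\gamma+1}$ end-extends $Q_\gamma$, $q_\delta$ precedes $q_\epsilon$ in $Q_{\gamma+1}$ and hence in $Q_\Gamma$; as $\prec$ is the queue order, $\delta<\epsilon$. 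Now suppose, for contradiction, $\alpha\le\alpha'$ but $\pi(\alpha')<\pi(\alpha)$; the batch lemma with $(\delta,\epsilon)=(\alpha',\alpha)$ gives $\alpha'<\alpha$, a contradiction. So $\pi$ is weakly monotone, whence so is $p$, and $\prec$ is breadth-first. I expect the only subtlety to be the bookkeeping identifying $\prec$ with the queue order and $q_{\pi(\alpha)}$ with the $\prec$-least neighbor; note that the order in which each neighbor-batch $\Gamma_{q_\beta}\setminus Q_\beta$ is internally arranged is irrelevant --- only its being appended as a contiguous block matters.
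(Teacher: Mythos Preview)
Your proof is correct and takes essentially the same route as the paper: both establish weak monotonicity of the least-neighbor function by comparing queue membership at the stage where the relevant least neighbor is processed, and your batch lemma is simply the contrapositive packaging of the paper's contradiction argument (the paper even records your identity $p^\circ(v)=q_\alpha$ for $v\in Q_{\alpha+1}\setminus Q_\alpha$ as an ``alternative characterization'' immediately after its proof). One genuine improvement in your write-up is that you explicitly verify $\prec$ is a traversal via $\pi(\alpha)<\alpha$ and Lemma~\ref{traversals are decreasing}, whereas the paper's proof asserts ``it suffices to show that $p^\circ$ is monotone'' and never separately checks the traversal property.
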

	
	\begin{proof}
	Let $p^\circ$ be the least-neighbor function associated with $\prec$. It suffices to show that $p^\circ$ is monotone. By contradiction, suppose that $v \prec w$ but $p^\circ(w) \prec p^\circ(v)$. Let $q_\alpha = p^\circ(w)$. Neither $v$ nor $w$ is contained in $Q_\alpha$, otherwise one would have a neighbor in $B_\alpha$, which contains neither $p^\circ(v)$ nor $p^\circ(w)$.
		Similarly, $v$ is not contained in $Q_{\alpha+1}$, as it has no neighbors in $B_{\alpha+1} = Q_\alpha \cup p^\circ(w)$. On the other hand, $Q_{\alpha+1}$ contains $w$, being a neighbor of $q_\alpha$. Therefore, $Q_{\alpha+1}$ is an initial segment of $\prec$ containing $w$ but excluding $v$, contradicting $v \prec w$.
	\end{proof}

	We call $\prec$ the \textit{algorithmic breadth-first traversal}, and reserve $p^\circ$ for writing its associated least-neighbor function.
    
    The correctness of deterministic breadth-first search implies that every graph admits a breadth-first traversal, but it is not true that every graph admits a breadth-first traversal of order type the cardinality of the graph (Corollary \ref{BFT order type LB}). This is remarkable insofar as it is different from the case of regular traversals (Corollary \ref{exists traversal of least order type}).

	\paragraph{The breadth-first traversal tree}
    The function $p^\circ$ also admits an alternative characterization: $p^\circ(v) = q_\alpha$, where $v \in Q_{\alpha+1} \setminus Q_\alpha$. Said another way, whenever we add a new element $q_\alpha$ to $B_\alpha$, its set of neighbors $\Gamma_\alpha \setminus Q_\alpha$ that get added to the end of $Q_\alpha$ is exactly the inverse image of $q_\alpha$ under $p^\circ$.\footnote{Hence, the $p^\circ$-inverse image of singleton forms an interval in the algorithmic breadth-first traversal of order type at most the input order type.} This is easily proven by induction on $\alpha$.
    
    This characterization implies the breadth-first analogue to Lemma \ref{search tree search}. Suppose we define $E^\star \subseteq E$ to be the edge relation connecting each $v$ to $p^\circ(v)$. By Lemma \ref{traversal tree}, $(V,E^\star)$ is a spanning tree of $(V,E)$. Moreover, 
 
	\begin{lem} \label{BFT tree correctness}
		The algorithmic breadth-first traversals $\prec$ and $\prec^\star$ of $(V,E,<)$ and $(V,E^\star,<)$ respectively are identical.
	\end{lem}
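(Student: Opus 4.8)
The plan is to replicate the inductive argument of Theorem~\ref{search tree search}, working directly with the bookkeeping of deterministic breadth-first search rather than with the traversal orders themselves. Write $(B_\alpha, Q_\alpha, q_\alpha)$ for the data produced by running the algorithm on $(V,E,<)$ and $(B^\star_\alpha, Q^\star_\alpha, q^\star_\alpha)$ for the data produced on $(V,E^\star,<)$; since $(V,E^\star)$ is a spanning tree of $(V,E)$ by Lemma~\ref{traversal tree}, it is connected, so the second run is well-defined. It suffices to prove by transfinite induction on $\alpha$ that $Q_\alpha = Q^\star_\alpha$ as ordered sequences and $B_\alpha = B^\star_\alpha$; this forces $q_\alpha = q^\star_\alpha$ for every $\alpha$, hence $\prec\,=\,\prec^\star$, and in particular the closure ordinals agree.

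The base case and limit case are immediate: $Q_0 = (v_0) = Q^\star_0$, $B_0 = \emptyset = B^\star_0$, and at a limit $\alpha$ both $Q$ and $B$ are the corresponding unions over $\beta < \alpha$. For the successor case, suppose $Q_\alpha = Q^\star_\alpha$ and $B_\alpha = B^\star_\alpha$, and set $q := q_\alpha = q^\star_\alpha$. Then $B_{\alpha+1} = B_\alpha\cup\{q\} = B^\star_\alpha\cup\{q\} = B^\star_{\alpha+1}$, so only the queue update needs attention. Since both runs append to the end of the queue, in input order, the neighbors of $q$ that are not already enqueued, and the input order is fixed, it is enough to check that $\Gamma_q\setminus Q_\alpha = \Gamma^\star_q\setminus Q_\alpha$ as sets, where $\Gamma$ and $\Gamma^\star$ denote neighborhoods in $E$ and $E^\star$ respectively.

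Here I would invoke the characterization of $p^\circ$ recorded just before the lemma: for every ordinal $\beta$, the set $\Gamma_{q_\beta}\setminus Q_\beta$ added to the queue when $q_\beta$ is processed equals $(p^\circ)^{-1}(q_\beta)$. In particular $\Gamma_q\setminus Q_\alpha = (p^\circ)^{-1}(q)$. Now the $E^\star$-neighborhood of any vertex $v$ is $\{p^\circ(v)\}\cup (p^\circ)^{-1}(v)$, with the term $p^\circ(v)$ omitted when $v = v_0$; so $\Gamma^\star_q = \{p^\circ(q)\}\cup (p^\circ)^{-1}(q)$. It remains to see that $p^\circ(q)\in Q_\alpha$ while $(p^\circ)^{-1}(q)\cap Q_\alpha = \emptyset$. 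The former holds because $q$ entered the queue at the stage $\gamma+1$ at which its tree-parent $p^\circ(q) = q_\gamma$ was processed; since $q\in Q_\alpha$ and $q\notin Q_\gamma$ we have $\gamma < \alpha$, whence $p^\circ(q) = q_\gamma\in B_{\gamma+1}\subseteq B_\alpha\subseteq Q_\alpha$. The latter holds because, again by the characterization, the elements of $(p^\circ)^{-1}(q) = \Gamma_q\setminus Q_\alpha$ are by definition not members of $Q_\alpha$. Combining, $\Gamma^\star_q\setminus Q_\alpha = (p^\circ)^{-1}(q) = \Gamma_q\setminus Q_\alpha$, so $Q_{\alpha+1} = Q^\star_{\alpha+1}$, closing the induction.

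The argument is essentially routine once the queue-characterization of $p^\circ$ is in hand; the only place demanding care is the bookkeeping in the successor step---verifying that $p^\circ(q)$ has already been enqueued and that no tree-child of $q$ has been enqueued prematurely---which is exactly where the tree structure of $E^\star$ and the FIFO discipline of the breadth-first queue interact. I do not anticipate a genuine obstacle beyond getting this bookkeeping right.
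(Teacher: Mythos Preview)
Your proposal is correct and follows essentially the same approach as the paper: both argue by transfinite induction that the pairs $(B_\alpha,Q_\alpha)$ and $(B^\star_\alpha,Q^\star_\alpha)$ coincide, handling the successor step by invoking the queue characterization $\Gamma_{q_\alpha}\setminus Q_\alpha = (p^\circ)^{-1}(q_\alpha)$ to identify the newly enqueued neighbors in $E$ with those in $E^\star$. Your write-up is somewhat more explicit than the paper's (you spell out why $p^\circ(q)\in Q_\alpha$, whereas the paper simply uses $E^\star\subseteq E$ for one containment and the characterization for the other), but the argument is the same.
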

	
	\begin{proof}
		Let $(B_\alpha)$ and $(Q_\alpha)$ be the stages of $\prec$ and $(C_\alpha)$ and $(R_\alpha)$ be the stages of $\prec^\star$. We will show by induction on $\alpha$ that $B_\alpha = C_\alpha$ and $Q_\alpha = R_\alpha$.
		It suffices to only consider successor stages of the induction. Suppose that $B_\alpha = C_\alpha$ and $Q_\alpha = R_\alpha$. Then $Q_\alpha \setminus B_\alpha$ and $R_\alpha \setminus C_\alpha$ must both be empty or both be nonempty. In the former case, each sequence has stabilized and we're done. In the latter case, the first elements of each are identical, so that $B_{\alpha+1} = C_{\alpha+1}$. If the common new element is $v$, then $Q_{\alpha+1}$ and $R_{\alpha+1}$ are obtained from the sequence $Q_\alpha = R_\alpha$ by adding the $E$-neighbors and $E^\star$ neighbors of $v$ respectively that are not already in $Q_\alpha$. Each $E^\star$-neighbor is already an $E$-neighbor, so it remains to show the converse. Each $E$-neighbor $w$ not in $Q_\alpha$ satisfies $p^\circ(w) = v$; hence, $w$ and $v$ are $E^\star$-neighbors and we're done.
		
	\end{proof}
    
    Lemma \ref{BFT tree correctness} is a useful tool in the analysis of the algorithmic breadth-first traversal for, as we shall see, breadth-first traversals of acyclic graphs are particularly well behaved.
	
	\subsection{Order and the algorithmic breadth-first traversal}

	It is a fundamental fact that breadth-first search ``goes level by level.'' In fact, if $(V,E)$ is acyclic, then \emph{any} breadth-first traversal can be partitioned into intervals of all elements a fixed distance from the least. We start out by proving this fundamental fact (indeed, something stronger), for arbitrary wellordered traversals of infinite acyclic graphs.

	\begin{prop} \label{BFT decomposition}
	Suppose that $v_0$ is a vertex in the acyclic graph $(V,E)$, $(V,\prec)$ is any wellordered breadth-first traversal starting at $v_0$, and $p$ is its least-neighbor function. Let $L_n$ be the set of nodes distance $n$ from $v_0$. Then for all $n<\omega$, $p$ maps $L_{n+1}$ into $L_n$,  $L_n$ is an interval in $(V,\prec)$, and (abusing notation) $L_n \prec L_{n+1}$.
	\end{prop}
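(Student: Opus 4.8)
The plan is to induct on $n$ with a hypothesis strengthened well beyond the statement. First, since $(V,E)$ is acyclic and we may restrict attention to the connected component of $v_0$, we regard $(V,E)$ as a tree rooted at $v_0$; then every $v \neq v_0$ has a unique neighbor $\pi(v)$ with $d(\pi(v)) = d(v) - 1$ (its parent), all other neighbors of $v$ lie in $L_{d(v)+1}$, and every edge joins consecutive levels. The function $p$ satisfies $p(v) \prec v$ because $\prec$ is a traversal, and is weakly monotone because $\prec$ is breadth-first.

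I would prove by induction on $n < \omega$ the conjunction $H(n)$: (i) each of $L_0,\dots,L_n$ is an interval in $(V,\prec)$; (ii) $L_0 \prec L_1 \prec \cdots \prec L_n$; (iii) $p$ maps $L_k$ into $L_{k-1}$ for $1 \le k \le n$; and, crucially, (iv) $L_n \prec v$ for every $v$ with $d(v) > n$. Note that (i), (ii), (iv) together say that $L_0 \cup \cdots \cup L_n$ is a $\prec$-initial segment. The base case $n = 0$ is immediate, as $L_0 = \{v_0\}$ and $v_0$ is $\prec$-least.

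For the inductive step $H(n) \Rightarrow H(n+1)$ I would argue in the following order. \textbf{(a)} $p$ maps $L_{n+1}$ into $L_n$: given $w \in L_{n+1}$, its parent $\pi(w)$ lies in $L_n$ while its other neighbors lie in $L_{n+2}$; by (iv) of $H(n)$, $\pi(w) \prec v$ for every $v$ of distance $> n$, so $\pi(w)$ is the $\prec$-least neighbor of $w$, i.e.\ $p(w) = \pi(w) \in L_n$. \textbf{(b)} $L_n \prec L_{n+1}$: immediate from (iv) of $H(n)$, since each element of $L_{n+1}$ has distance $n+1 > n$. \textbf{(c)} $L_{n+1}$ is an interval: if $v \prec x \prec w$ with $v, w \in L_{n+1}$, then $x$ lies past the initial segment $L_0 \cup \cdots \cup L_n$ (as $v$ does and $x \succ v$), so $d(x) \ge n+1$; meanwhile weak monotonicity of $p$ gives $p(v) \preceq p(x) \preceq p(w)$ with $p(v), p(w) \in L_n$ by (a), so $p(x)$ lies in the interval $L_n$, forcing $d(x) \le n+1$; hence $x \in L_{n+1}$. \textbf{(d)} $L_{n+1} \prec v$ whenever $d(v) > n+1$: otherwise some such $v$ has $v \prec w$ for some $w \in L_{n+1}$, so $p(v) \preceq p(w) \in L_n$ by monotonicity and (a); then $p(v)$ lies in the initial segment $L_0 \cup \cdots \cup L_n$, say $p(v) \in L_m$ with $m \le n$, whence $d(v) \le m + 1 \le n+1$, a contradiction.

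The main obstacle is simply choosing the right induction hypothesis: the literal statement that $L_n$ is an interval with $L_n \prec L_{n+1}$ does not by itself pin down where $p(v)$ lands for vertices $v$ far from $v_0$, and one needs the extra clause (iv) --- equivalently, that $\bigcup_{k\le n} L_k$ is a genuine $\prec$-initial segment --- in order to run steps (c) and (d). Once that clause is carried along, every remaining step is a one-line use of either the traversal inequality $p(v) \prec v$ or the weak monotonicity of $p$, together with the tree-structural fact that adjacency changes distance by exactly one.
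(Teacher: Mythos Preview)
Your proof is correct. It differs from the paper's argument chiefly in organization: you run a single induction on $n$ carrying the strengthened hypothesis that $\bigcup_{k\le n}L_k$ is a genuine $\prec$-initial segment (your clause~(iv)), and this lets you identify $p(w)$ with the tree-parent $\pi(w)$ directly in step~(a). The paper instead first proves ``$p$ maps $L_{n+1}$ into $L_n$'' globally, by a minimal-counterexample descent (if $w$ is the $\prec$-least failure then $v=p(w)$ lies in $L_{n+2}$ and $p(v)$ is forced into $L_{n+3}$, giving a smaller failure), and only afterwards runs an induction on $n$ for the interval claim, using monotonicity of $p$ to push a would-be violation of ``$L_n$ is an interval'' down to $L_{n-1}$. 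Your packaging is arguably tidier and makes the role of the initial-segment property explicit; the paper's version has the minor advantage that its first step isolates a self-contained fact (the level-lowering property of $p$) whose proof uses nothing about intervals. Either route is fine.
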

    
    \begin{proof}
    First we show that $p$ maps $L_{n+1}$ into $L_n$. If not, let $w$ be the $\prec$-least element such that $w \in L_{n+1}$ but $p^\circ(w) \notin L_n$, for some $n$. Let $v = p(w)$. Then $v \in L_{n+2}$, since all neighbors of $w$ are in adjacent levels. In particular, $v \neq v_0$ and we can define $u = p(v)$, so that $u \prec v \prec w$. Since $w$ is the unique neighbor of $v$ in $L_{n+1}$, it must be the case that $u \in L_{n+3}$. But then $v \prec w$, $v \in L_{n+2}$, and $p(w) \in L_{n+3}$, contradicting the minimality of $w$.
    
    Next, we show that each $L_n$ is an interval in $(V,\prec)$. Assume by contradiction that $L_n$ is not an interval but $L_{n'}$ is for each $n' < n$. Then there is some configuration $u \prec v \prec w$, where $u \in L_n$, $v \in L_m$, and $w \in L_n$, for some $m \neq n$. Let $u'$, $v'$, and $w'$ be the images of $u$, $v$, and $w$ respectively under $p^\circ$, which exist because neither $n$ nor $m$ can be 0.
    
    Then $u' \preceq v' \preceq w'$, but since $v' \in L_{m-1}$ and $u', w' \in L_{n-1}$, $u' \prec v' \prec w'$, which contradicts the assumption that $L_{n-1}$ is an interval.
    
    Since $p : L_{n+1} \to L_n$ and each $L_n$ is an interval, it follows that every element of $L_n$ precedes every element of $L_{n+1}$ in $(V,\prec)$, concluding the proof.
    \end{proof}
    
    We now derive two corollaries of this result, one a lower bound for all breadth-first traversals of a certain type of graph, and the other an upper bound for the algorithmic breadth-first traversal of all graphs. Together they show that the upper bound is, in general, tight.
    
    \begin{cor} \label{BFT order type LB}
    Let $\kappa$ be a cardinal. Any breadth-first traversal of the complete $\kappa$-branching tree of depth $\omega$ (viewed as a graph), starting at the root, has order type at least $\kappa^\omega$.
    \end{cor}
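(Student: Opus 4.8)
The plan is to combine the level decomposition of Proposition~\ref{BFT decomposition} with a one-step multiplicative estimate on the order type of each level, then iterate. Write $T=\kappa^{<\omega}$ for the tree, with root $v_0$ the empty sequence and edges joining each node to its $\kappa$ immediate successors; $T$ is a connected acyclic graph, so Proposition~\ref{BFT decomposition} applies to the given breadth-first traversal $\prec$ starting at $v_0$. It tells us that the level sets $L_n$ (nodes of length $n$, equivalently of distance $n$ from $v_0$) are intervals of $(T,\prec)$ with $L_0\prec L_1\prec L_2\prec\cdots$, and that the least-neighbor function $p$ of $\prec$ maps $L_{n+1}$ into $L_n$. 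Since $T$ is a tree, the unique neighbor of a node $w\in L_{n+1}$ lying in $L_n$ is its parent, while the $\kappa$ children of $w$ lie in $L_{n+2}\succ L_{n+1}$; hence $p\restr L_{n+1}$ is precisely the parent map, a surjection onto $L_n$ each of whose fibers --- the child-sets $\mathrm{children}(v)$, $v\in L_n$ --- has cardinality $\kappa$.

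Next I would use breadth-firstness to express $\mathrm{otp}(L_{n+1},\prec)$ as an ordinal sum over $(L_n,\prec)$. Weak monotonicity of $p$ gives that $v\prec v'$ in $L_n$ implies every child of $v$ precedes every child of $v'$: if a child $w$ of $v$ and a child $w'$ of $v'$ had $w'\preceq w$, then $w'\ne w$ forces $w'\prec w$, so $p(w')\preceq p(w)$, i.e.\ $v'\preceq v$, a contradiction. Thus each $\mathrm{children}(v)$ is an interval of $(L_{n+1},\prec)$ and these intervals occur in the $\prec$-order of $L_n$, so
\[
\mathrm{otp}(L_{n+1},\prec)\;=\;\sum_{v\in(L_n,\prec)}\mathrm{otp}\bigl(\mathrm{children}(v),\prec\bigr).
\]
Every summand is the order type of a wellordering of cardinality $\kappa$, hence is at least $\kappa$, so $\mathrm{otp}(L_{n+1})\ge\kappa\cdot\mathrm{otp}(L_n)$ in ordinal arithmetic.

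Now I would finish by induction and a supremum argument. Since $\mathrm{otp}(L_0)=1=\kappa^0$ and, assuming $\mathrm{otp}(L_n)\ge\kappa^n$, $\mathrm{otp}(L_{n+1})\ge\kappa\cdot\mathrm{otp}(L_n)\ge\kappa\cdot\kappa^n=\kappa^{n+1}$ --- the last equality being a consequence of associativity of ordinal multiplication --- we obtain $\mathrm{otp}(L_n)\ge\kappa^n$ for all $n<\omega$. For each $n$, $\bigcup_{k\le n}L_k$ is an initial segment of $(T,\prec)$ (downward closure follows from $L_0\prec L_1\prec\cdots$ together with each $L_k$ being an interval), so $\mathrm{otp}(T,\prec)\ge\mathrm{otp}\bigl(\bigcup_{k\le n}L_k\bigr)\ge\mathrm{otp}(L_n)\ge\kappa^n$; since this holds for every $n$, $\mathrm{otp}(T,\prec)\ge\sup_n\kappa^n=\kappa^\omega$.

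The structural heavy lifting is done entirely by Proposition~\ref{BFT decomposition}, so I do not expect a genuine obstacle; the only point demanding care is the ordinal bookkeeping in the last two paragraphs --- ensuring that each level is multiplied by a full copy of $\kappa$ (so that $\mathrm{otp}(L_n)$ grows at the rate of the $n$-fold product $\kappa\cdots\kappa=\kappa^n$ and not of something smaller), and noting that the countable ordinal sum $\sum_n\mathrm{otp}(L_n)$ dominates every $\kappa^n$ and hence their supremum $\kappa^\omega$.
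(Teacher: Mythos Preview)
Your argument is correct and follows essentially the same route as the paper's: invoke Proposition~\ref{BFT decomposition} to get the level decomposition with $p$ a weakly monotone map $L_{n+1}\to L_n$, observe that each fiber has cardinality (hence order type at least) $\kappa$, and conclude by induction that $\mathrm{otp}(L_n)\ge\kappa^n$, whence $\mathrm{otp}(T,\prec)\ge\kappa^\omega$. Your write-up is in fact more explicit than the paper's in justifying that the fibers form consecutive intervals and in spelling out the final supremum step, but the underlying proof is the same.
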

    
    \begin{proof}
    Using the terminology of Proposition \ref{BFT decomposition}, it suffices to show that if $\prec$ is any breadth-first traversal, then the order type of $(L_n,\prec)$ is at least $\kappa^n$. We prove this by induction on $n$.
    This is true for $n=0$, where the order type of $L_n$ is $1 = \kappa^0$. For $n>0$, Proposition \ref{BFT decomposition} again says that $p$ is a weakly monotone map from $L_n$ to $L_{n-1}$. The latter has order type at least $\kappa^{n-1}$, by induction, and the inverse image of each element in $L_{n-1}$ has order type at least $\kappa$, by cardinality considerations. Therefore, the order type of $L_n$ is at least $\kappa^{n-1} \cdot \kappa = \kappa^n$.
    \end{proof}

	\begin{cor}
	Suppose $\prec$ is the algorithmic breadth-first traversal of $(V,E,<)$ and that $(V,<)$ has order type $\alpha$. Then $(V,\prec)$ has order type at most $\alpha^\omega$.
	\end{cor}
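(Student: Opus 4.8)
The plan is to pass to the breadth-first search tree and stratify the traversal by distance from the root, using Proposition~\ref{BFT decomposition} to show the resulting levels are consecutive intervals whose order types grow by at most a factor of $\alpha$ each. First dispose of the degenerate case: if $\alpha$ is finite then $V$ is finite and $(V,\prec)$ has order type $|V| = \alpha \le \alpha^\omega$, so assume $\alpha$ is infinite. Let $E^\star\subseteq E$ be the tree edge relation connecting each vertex to its $p^\circ$-image. By Lemma~\ref{BFT tree correctness} the algorithmic breadth-first traversal of $(V,E^\star,<)$ is again $\prec$, so $\prec$ is a breadth-first traversal (starting at $v_0$) of the acyclic connected graph $(V,E^\star)$; moreover the least-neighbor function of $\prec$ with respect to $E^\star$ is exactly $p^\circ$, since in the tree the unique $\prec$-lesser $E^\star$-neighbor of a vertex $v$ is $p^\circ(v)$ (all other $E^\star$-neighbors $w$ satisfy $p^\circ(w)=v\succ v$, impossible). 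For $n<\omega$ let $L_n$ be the set of vertices at distance exactly $n$ from $v_0$ in $(V,E^\star)$; connectedness makes $\{L_n : n<\omega\}$ a partition of $V$.

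Next I would apply Proposition~\ref{BFT decomposition} to the acyclic graph $(V,E^\star)$ and the traversal $\prec$: each $L_n$ is an interval in $(V,\prec)$, one has $L_n \prec L_{n+1}$, and $p^\circ$ maps $L_{n+1}$ into $L_n$. Hence
$$\operatorname{ot}(V,\prec) \;=\; \sum_{n<\omega}\operatorname{ot}(L_n,\prec),$$
and it suffices to prove $\operatorname{ot}(L_n,\prec)\le\alpha^n$ by induction on $n$. The base case is $\operatorname{ot}(L_0)=1$. For the step, note $L_{n+1}$ is the disjoint union of the fibers $(p^\circ)^{-1}(v)$ for $v\in L_n$: every $w\in L_{n+1}$ has $p^\circ(w)\in L_n$, and conversely any $w$ with $p^\circ(w)=v\in L_n$ lies one level above $v$, i.e.\ in $L_{n+1}$. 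By the characterization of $p^\circ$ noted after Lemma~\ref{BFT tree correctness}, each fiber $(p^\circ)^{-1}(v)$ is an interval in $(V,\prec)$ of order type at most $\alpha$ (the input order type). Since $\prec$ is breadth-first, $p^\circ$ is weakly monotone, so these fibers occur inside the interval $(L_{n+1},\prec)$ as consecutive blocks arranged in the order inherited from $(L_n,\prec)$. Therefore $(L_{n+1},\prec)$ is an ordinal sum of $\operatorname{ot}(L_n)$-many blocks each of order type $\le\alpha$, giving $\operatorname{ot}(L_{n+1})\le\alpha\cdot\operatorname{ot}(L_n)\le\alpha\cdot\alpha^n=\alpha^{n+1}$.

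Finally I would assemble: $\operatorname{ot}(V,\prec)=\sum_{n<\omega}\operatorname{ot}(L_n)\le\sum_{n<\omega}\alpha^n$, and for infinite $\alpha$ each partial sum satisfies $\sum_{k\le n}\alpha^k=\alpha^n$ (the top term absorbs the rest since $1+\alpha=\alpha$), whence $\sum_{n<\omega}\alpha^n=\sup_{n<\omega}\alpha^n=\alpha^\omega$. This yields $\operatorname{ot}(V,\prec)\le\alpha^\omega$.

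The step requiring the most care — the genuine crux rather than a hard obstacle — is the inductive step: one must simultaneously use (i) the breadth-first hypothesis in the form ``$p^\circ$ is weakly monotone'', (ii) the fact that each fiber $(p^\circ)^{-1}(v)$ is an interval, and (iii) that $L_{n+1}$ is itself an interval, in order to conclude that $(L_{n+1},\prec)$ decomposes as an $\operatorname{ot}(L_n)$-indexed ordinal sum of blocks of size $\le\alpha$. Everything else is bookkeeping: the reduction to the tree $E^\star$ via Lemma~\ref{BFT tree correctness} and Proposition~\ref{BFT decomposition}, the separate treatment of finite $\alpha$, and the ordinal identities $1+\alpha=\alpha$ and $\sum_{n<\omega}\alpha^n=\alpha^\omega$ for infinite $\alpha$.
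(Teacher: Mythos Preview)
Your proposal is correct and follows essentially the same route as the paper: reduce to the acyclic case via Lemma~\ref{BFT tree correctness}, invoke Proposition~\ref{BFT decomposition} to obtain the level decomposition $(L_n)_{n<\omega}$, and bound $\operatorname{ot}(L_n,\prec)\le\alpha^n$ by induction using the fiber-size bound on $(p^\circ)^{-1}(v)$. You are simply more explicit than the paper in handling the finite-$\alpha$ case and in summing $\sum_{n<\omega}\alpha^n=\alpha^\omega$ at the end, both of which the paper leaves implicit.
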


    \begin{proof}
    By Lemma \ref{BFT tree correctness}, it suffices to assume that $(V,E)$ is acyclic. By Propsition \ref{BFT decomposition}, $(V,\prec)$ admits an ordered partition into intervals $L_n$. It suffices to show that each $L_n$ has order type at most $\alpha^n$, which we do by induction on $n$.

	For $n=0$, the order type of $L_n$ is $1 = \alpha^0$. For $n>0$, Proposition \ref{BFT decomposition} again says that $p^\circ$ is a weakly monotone map from $L_n$ to $L_{n-1}$. The latter has order type at most $\alpha^{n-1}$ by induction, and the inverse image of each element in $L_{n-1}$ has order type at most $\alpha$. (This follows from the alternate characterization of $p^\circ$ above.) Therefore, the order type of $L_n$ is at most $\alpha^{n-1} \cdot \alpha = \alpha^n$.

    \end{proof}


\subsection{Lexicographic minimality and functional properties}
Proposition \ref{lex-minimal} has an analogue in the breadth-first context. Recall that we identify wellorders of a set $V$ with bijections from some ordinal onto $V$. For any two distinct wellorders $f$ 
and $f'$ of $V$, there is a least ordinal $\xi$ such that $f(\xi)\neq f'(\xi)$. If $V$ comes with some linear order $<$, then there is a natural order on wellorders given by $f < f'\iff f(\xi) < f'(\xi)$, called the lexicographic order.

	\begin{prop} \label{BF lex-minimal}
		The algorithmic breadth-first traversal is lexicographically least among all breadth-first traversals.
	\end{prop}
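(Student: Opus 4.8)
The plan is to mimic the proof of Proposition~\ref{lex-minimal}, adding one ingredient that uses the breadth-first hypothesis on the competing traversal. Write the algorithmic breadth-first traversal as the enumeration $\alpha\mapsto q_\alpha$ of $V$, with least-neighbor function $p^\circ$, and suppose toward a contradiction that $t'\colon\Gamma'\to V$ is a lexicographically smaller breadth-first traversal, with ordering $\prec'$ and least-neighbor function $p'$. Let $\gamma$ be the least ordinal at which $t'$ and the algorithmic traversal differ. Then $\gamma\ge 1$ (otherwise $t'(0)<v_0$, impossible since $v_0$ is $<$-least), the set $B_\gamma=\{q_\delta:\delta<\gamma\}$ is a common initial segment of both traversals on which $\prec$ and $\prec'$ coincide, and $t'(\gamma)<q_\gamma$ in the input order.

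First I would observe that $q_\gamma$ and $t'(\gamma)$ both lie in $Q_\gamma\setminus B_\gamma$: for $q_\gamma$ this is by definition, and $t'(\gamma)$ is non-minimal in the wellordered traversal $t'$, so by Lemma~\ref{traversals are decreasing} it has a neighbor $q_\beta\in B_\gamma$, whence it is enqueued no later than the stage at which $q_\beta$ is processed, i.e.\ $t'(\gamma)\in Q_{\beta+1}\subseteq Q_\gamma$. Since $q_\gamma$ is by definition the first element of $Q_\gamma$ outside $B_\gamma$, it precedes $t'(\gamma)$ in the enumeration, $q_\gamma\prec t'(\gamma)$. Now recall, from the stated characterization $p^\circ(v)=q_\alpha$ for $v\in Q_{\alpha+1}\setminus Q_\alpha$ together with the fact that the new neighbors of $q_\alpha$ are enqueued in input order, that for $u,v\in V\setminus\{v_0\}$ one has $u\prec v$ iff $p^\circ(u)\prec p^\circ(v)$, or $p^\circ(u)=p^\circ(v)$ and $u<v$. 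Applying this to $q_\gamma\prec t'(\gamma)$ and using $t'(\gamma)<q_\gamma$ to exclude the tie case, we get $p^\circ(q_\gamma)\prec p^\circ(t'(\gamma))$; write $r=p^\circ(q_\gamma)$ and $s=p^\circ(t'(\gamma))$, both of which lie in $B_\gamma$, so that $r\prec s$.

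Second I would compute the $\prec'$-least neighbors of $t'(\gamma)$ and $q_\gamma$. Since $B_\gamma$ is an initial segment for both $\prec$ and $\prec'$, on which the two orders agree, and since the $\prec'$-predecessors of $t'(\gamma)$ are exactly $B_\gamma$, the $\prec'$-least neighbor of $t'(\gamma)$ equals its $\prec$-least neighbor, i.e.\ $p'(t'(\gamma))=s$. Also $q_\gamma\notin B_\gamma$, so $q_\gamma$ occurs strictly after $t'(\gamma)$ in $t'$, i.e.\ $t'(\gamma)\prec' q_\gamma$; and $p'(q_\gamma)\preceq' r$ since $r$ is a neighbor of $q_\gamma$ lying in $B_\gamma$, hence $\prec'$-below $q_\gamma$. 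This is where the hypothesis enters: $t'$ being breadth-first means $p'$ is $\prec'$-monotone, so $t'(\gamma)\prec' q_\gamma$ yields $s=p'(t'(\gamma))\preceq' p'(q_\gamma)\preceq' r$, hence $s\preceq' r$. But $r,s\in B_\gamma$, where $\prec'$ agrees with $\prec$, so $s\preceq r$, contradicting $r\prec s$. Therefore no lexicographically smaller breadth-first traversal exists.

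The main thing to set up carefully is the queue bookkeeping behind the first step: that the global enumeration $\alpha\mapsto q_\alpha$ restricted to $Q_\gamma$ is exactly the queue (insertion) order, that each vertex is enqueued at the stage its $p^\circ$-image is processed, and that $B_\gamma$ and the relevant neighbor sets are initial segments in the appropriate orders. None of this is deep, but it has to be stated precisely. The one conceptual subtlety is that the breadth-first hypothesis must be applied to the competitor $t'$, not to the algorithmic traversal, with the monotonicity direction chosen so as to carry the inequality from $t'(\gamma)$ up to $q_\gamma$.
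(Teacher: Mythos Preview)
Your proof is correct. Both your argument and the paper's hinge on the same two moves: (i) the common initial segment $B_\gamma$ lets one transfer information between $\prec$ and $\prec'$, and (ii) the breadth-first monotonicity of $p'$ is applied to the pair $t'(\gamma)\prec' q_\gamma$. The bookkeeping differs, however. The paper defines $\alpha=\sup\{\beta:Q_\beta\subseteq B_\gamma\}$ (which amounts to $q_\alpha=p^\circ(q_\gamma)$, your $r$), shows directly that $p'(t'(\gamma))=p'(q_\gamma)=q_\alpha$, and then derives a contradiction in the \emph{input} order: $t'(\gamma)$ is a neighbor of $q_\alpha$ outside $Q_\alpha$ that is $<$-smaller than $q_\gamma$, violating the queue discipline. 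You instead introduce $s=p^\circ(t'(\gamma))$, use the explicit lexicographic description of $\prec$ in terms of $(p^\circ,<)$ to get $r\prec s$, and then derive the reversed inequality $s\preceq r$ from monotonicity of $p'$. Your route avoids the somewhat delicate sup construction and makes the role of the queue characterization transparent; the paper's route is a touch more direct in that it never names $s$ and lands the contradiction one step earlier.
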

	
	\begin{proof}

		Fix well-ordering $(V,E,<)$ with breadth first algorithmic traversal $t: \Gamma \to V$.  Suppose there is a lexicographically lesser breadth first traversal $t': \Gamma' \to V$.  Let $\gamma$ be the least element where $t$ and $t'$ disagree, so that $t'(\gamma) < t(\gamma)$ and $t'(\delta) = t(\delta)$ for $\delta < \gamma$. Let $W = t[\gamma]$, and $\alpha = \sup \{\beta:Q_\beta \subseteq W\}$. (Notice that $\alpha < \gamma$, so $q_\alpha = t(\alpha) = t'(\alpha)$.) Then $Q_\alpha \subseteq W$, since $$Q_\alpha = \bigcup_{\beta < \alpha} Q_\beta \subseteq W.$$

Let $p$ be the least neighbor function and $\sqsubset$ the ordering induced by $t'$. Let $v = t'(\gamma)$ and $w = t(\gamma)$. Then $v,w \notin Q_\alpha$, since $v,w \notin W$, and $v<w$ by assumption.  All neighbors of elements $\sqsubset$-less than $q_\alpha$ are in $Q_\alpha$, so both $p(v),p(w) \sqsupseteq q_\alpha$.

Since $w$ is a neighbor of $q_\alpha$, $p(w) = q_\alpha$. Since $p$ is weakly monotone and $v \sqsubset w$, $p(v) = q_\alpha$, so $v$ is a neighbor of $q_\alpha$ as well.

However, $w$ is the $<$-least neighbor of $q_\alpha$ that is not contained in $Q_\alpha$, contradicting $v<w$.
	\end{proof}

From this we can easily deduce the analogue of Lemma \ref{search fixes traversals}:

	\begin{cor}
	If $(V,E,<)$ is a wellordered graph with algorithmic breadth-first traversal $\prec$, and $<$ is 		already a breath-first traversal, then $<$ and $\prec$ agree.
	\end{cor}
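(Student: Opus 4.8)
The statement is the breadth-first analogue of Lemma~\ref{search fixes traversals}, and the cleanest route is to leverage the lexicographic minimality already established in Proposition~\ref{BF lex-minimal} rather than redo an induction. Regard $\prec$ and $<$ as bijections $t:\Gamma\to V$ and $s:\alpha\to V$ respectively, where $s$ simply enumerates $V$ in increasing $<$-order. Since $<$ is by hypothesis a breadth-first traversal, Proposition~\ref{BF lex-minimal} applies and gives $t\le_{\mathrm{lex}} s$. Suppose toward a contradiction that $t\neq s$, and let $\gamma$ be the least ordinal with $t(\gamma)\neq s(\gamma)$; such a $\gamma$ exists and is less than both $\Gamma$ and $\alpha$, since two distinct bijections onto $V$ cannot have one a proper initial segment of the other. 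By minimality of $\gamma$ we then have $t(\gamma) < s(\gamma)$ in the ambient order.

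\textbf{Key step.} The set $W=\{t(\delta):\delta<\gamma\}=\{s(\delta):\delta<\gamma\}$ is, because $s$ enumerates $(V,<)$ monotonically, exactly the set of the $<$-least $\gamma$ many elements of $V$; in particular $W$ is a $<$-initial segment and $s(\gamma)$ is the $<$-least element of $V\setminus W$. On the other hand $t$ is injective, so $t(\gamma)\notin W$, i.e.\ $t(\gamma)\in V\setminus W$, which forces $s(\gamma)\le t(\gamma)$. This contradicts $t(\gamma)<s(\gamma)$, so no such $\gamma$ exists and $\prec$ coincides with $<$.

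\textbf{Main obstacle.} There is no substantial obstacle here; the only point requiring care is keeping straight the two roles played by $<$: it is simultaneously the ambient order used to define the lexicographic comparison of traversals \emph{and} (in this corollary) one of the traversals being compared. Once one observes that the common prefix $W$ of $\prec$ and $<$ is therefore literally a $<$-initial segment, the contradiction is immediate. (One could alternatively prove the corollary by a direct transfinite induction mirroring Lemma~\ref{search fixes traversals}, showing $Q_\xi$ is always an initial segment of $(V,<)$, but invoking Proposition~\ref{BF lex-minimal} is shorter.)
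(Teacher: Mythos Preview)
Your proof is correct and follows essentially the same approach as the paper. Both arguments invoke Proposition~\ref{BF lex-minimal}; the paper simply asserts in one line that $<$ is lexicographically minimal among \emph{all} wellorderings of $V$ (hence among breadth-first traversals, forcing it to coincide with the algorithmic one), whereas you spell out this same observation explicitly via the initial-segment argument in your ``Key step.''
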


Notice that among the wellorderings of $V$ induced by $<$, the wellordering $<$ itself is lexicographically minimal. Since it happens to be a breadth-first traversal by assumption, Proposition \ref{BF lex-minimal} guarantees that it is the algorithmic breadth-first traversal of $(V,E,<)$.

Hence, if we view breadth-first search as computing a functional on the space of wellorderings of a fixed graph, then this functional surjects onto the set of breadth-first traversals, and is idempotent.

Finally, we raise the question of whether there are any functional relationships between graph search and breadth-first search. Fix a graph $(V,E)$, and let $\mathbf{B} \subseteq \mathbf{T} \subseteq \mathbf{W}$ be its set of breadth-first traversals, traversals, and wellorderings respectively. Let $\tau$ and $\beta$ be the functionals operating on $\mathbf{W}$ defined by deterministic graph search and breadth-first search respectively. Then $$\beta = \tau \circ \beta,$$ trivially, since $\beta$ maps into a set fixed by $\tau$.

One might hope that $\beta = \beta \circ \tau$ as well; as a breadth-first traversal is a refinement of the notion of traversal, it would be nice if the equivalence $\beta(f) = \beta(f')$ were a refinement of $\tau(f) = \tau(f')$. Unfortunately, this is false. Consider the graph on $6$ elements obtained by connecting a 5-cycle to a single vertex by one edge. Order the graph by labeling the 5-cycle $(0,1,2,4,5)$, labeling the single vertex $3$, and connecting it to $5$. Then if we apply $\beta$ to this ordering, we get $(0,1,5,2,3,4)$, whereas if we apply $\beta \circ \tau$, we get $(0,1,5,2,4,3)$.

At the same time it seems like the identity $\beta = \beta \circ \tau$ is often true, in that constructing counterexamples is tricky. We wonder if there is any meaningful theorem in this direction, or whether any other nontrivial functional identities hold of $\beta$ and $\tau$.


\begin{thebibliography}{10}
    \bibitem{CK08} Derek Corneil and Richard Krueger. \emph{A Unified View of Graph Searching}. Siam Journal of Discrete Mathematics, Vol. 22 (2008), 1,259-1,276.
    \bibitem{D16} Reinhard Diestel. \emph{Graph Theory}, 5th ed. Graduate Texts in Mathematics, Vol. 173. Springer-Verlag, Heidelberg, 2016.
    \bibitem{Mos74} Yiannis Moschovakis, \emph{Elementary Induction on Abstract
Structures}. Studies in Logic and the Foundations of Mathematics, Vol. 77. North-Holland, Amsterdam, 1974.
    \end{thebibliography}
\end{document}